\numberwithin{equation}{section}
\DeclareSymbolFont{cyrletters}{OT2}{wncyr}{m}{n}
\DeclareMathSymbol{\Sha}{\mathalpha}{cyrletters}{"58}
\newtheorem{thm}{Theorem}[section]
\newtheorem{prop}[thm]{Proposition}
\newtheorem{lem}[thm]{Lemma}
\newtheorem{cor}[thm]{Corollary}
\newtheorem{conj}[thm]{Conjecture}
\newtheorem{remark}[thm]{Remark}
\newtheorem*{thm*}{Theorem}
\newtheorem*{prop*}{Proposition}
\newtheorem*{lem*}{Lemma}
\newtheorem*{cor*}{Corollary}
\newtheorem*{hyp*}{Hypothesis}
\renewcommand{\bold}{\boldsymbol}
\newcommand{\mf}{\mathfrak}
\newcommand{\e}{\varepsilon}
\newcommand{\mc}{\mathcal}
\newcommand{\mb}{\mathbb}
\renewcommand{\i}{\infty}
\newcommand{\ovl}{\overline}
\newcommand{\mto}{\mapsto}
\newcommand{\xto}{\xrightarrow}
\renewcommand{\O}{\Omega_t}
\newcommand{\Q}{\mb{Q}}
\newcommand{\Z}{\mb{Z}}
\newcommand{\C}{\mb{C}}
\renewcommand{\o}{\mc{O}}
\DeclareMathOperator{\Gal}{Gal}
\DeclareMathOperator{\Sel}{Sel}
\DeclareMathOperator{\rank}{rank}
\DeclareMathOperator{\ord}{ord}
\DeclareMathOperator{\End}{End}
\DeclareSymbolFont{cyrletters}{OT2}{wncyr}{m}{n}
\DeclareMathSymbol{\Sha}{\mathalpha}{cyrletters}{"58}
\begin{document}

\title[Tamagawa number divisibility of $L$-values of twists of Fermat curve]{Tamagawa number divisibility of central $L$-values of twists of the Fermat elliptic curve}

\author{Yukako Kezuka}
\thanks{This work was supported by the SFB 1085 ``Higher invariants'' at the University of Regensburg, funded by the Deutsche Forschungsgemeinschaft (DFG)}

\maketitle
\selectlanguage{french} 
\begin{abstract}\'{E}tant donn\'{e} un entier $ N> 1 $ premier \`{a} $ 3 $, on d\'{e}signe par $ C_N $ la courbe elliptique $ x ^ 3 + y ^ 3 = N $. On \'{e}tudie d'abord la valuation $3$-adique  de la partie alg\'{e}brique de la valeur en $s=1$ de la fonction $ L $ de Hasse--Weil $L(C_N, s)$ de $C_N$ sur $\Q$ et on \'{e}tablit une relation entre la partie de $3$-torsion de son groupe de Tate--Shafarevich et le nombre de diviseurs premiers distincts de $ N $ qui sont inertes dans le corps quadratique imaginaire $ K=\Q(\sqrt {-3})$.  Dans la cas où $L(C_N,1)\neq 0$ et $N$ est en produit de nombres premiers décomposés dans $K$, on montrer que l'ordre du groupe de Tate--Shafarevich tel que prédit par la conjecture de la conjecture de Birch et Swinnerton-Dyer est un carré parfait.

\end{abstract}
\selectlanguage{english}
\begin{abstract} Given any integer $N>1$ prime to $3$, we denote by $C_N$ the elliptic curve $x^3+y^3=N$. We first study the $3$-adic valuation of the algebraic part of the value of the Hasse--Weil $L$-function $L(C_N,s)$ of $C_N$ over $\mathbb{Q}$ at $s=1$, and we exhibit a relation between the $3$-part of its Tate--Shafarevich group and the number of distinct prime divisors of $N$ which are inert in the imaginary quadratic field $K=\mathbb{Q}(\sqrt{-3})$. In the case where $L(C_N,1)\neq 0$ and $N$ is a product of split primes in $K$, we show that the order of the Tate--Shafarevich group as predicted by the conjecture of Birch and Swinnerton-Dyer is a perfect square.
\end{abstract}

\section{Introduction and main results}
The study of elliptic curves dates back at least to the $17$th century when Fermat proved that there is no right-angled triangle whose side lengths are integers and whose area is a perfect square. Much of the arithmetic of elliptic curves remains mysterious and has been one of the main focuses of number theory in the past century. This is highlighted by the remarkable conjecture of Birch and Swinnerton-Dyer, which was developed with the help of numerical calculations \cite{bsd2}. Given an elliptic curve $E$ defined over a number field $F$, the conjecture relates the behaviour of its Hasse--Weil {$L$-function} $L(E/F,s)$ (in the complex variable $s$, convergent in the half plane $\mathrm{Re}(s)>\frac{3}{2}$) at $s=1$ to the order of its Tate--Shafarevich group $\Sha(E/F)$ (not known to be finite in general). The Tate--Shafarevich group is defined by
$$\Sha(E/F)=\ker\left(H^1(F,E(\ovl{F}))\to \prod_v H^1(F_v, E(\ovl{F}_v))\right),$$
where $v$ runs over all valuations of $F$, thus its order measures the failure of the local-to-global principle for $E/F$. Thankfully, in the two cases when:
\begin{enumerate}
\item $\End_F(E)\neq \Z$, or
\item $E$ is defined over $\Q$,
\end{enumerate}
we now know that $L(E/F,s)$ has analytic continuation to the whole complex plane and satisfies a functional equation relating its value at $s$ with its value at $2-s$. In particular, $L(E/F,s)$ is known to be defined at $s=1$ in these cases, and its value is called the central value. Case (1) follows from a theorem of Deuring, which allows us to identify $L(E/F,s)$ with a product of Hecke $L$-functions with Grossencharacter, whereas case (2) is a consequence of the modularity theorem. If an elliptic curve $E/F$ satisfies (1), we say that $E$ has complex multiplication. In this case, we have a canonical injection
$$K=\Q\otimes_\Z \End_F(E)\hookrightarrow F$$
given by the action of $\End_F(E)$ on the space of holomorphic $F$-differentials on $E$, and we also know that $K$ is an imaginary quadratic field. Suppose in addition that $L(E/F,1)\neq 0$. Iwasawa theory is an extremely powerful tool in studying, for a prime number $p$, the $p$-part of the Birch--Swinnerton-Dyer conjecture, which predicts the order of the $p$-primary subgroup $\Sha(E/F)(p)$ of $\Sha(E/F)$. If $F=K$, Rubin used Iwasawa theory to show in \cite{rubin} that the $p$-part of the Birch--Swinnerton-Dyer conjecture holds for $E$ for all $p$ not dividing the order of the group of roots of unity in $K$. In a forthcoming paper with  J.~Coates, Y.~Li and Y.~Tian, we shall show, using Iwasawa theory, the $p$-part of the Birch--Swinnerton-Dyer conjecture for a family of elliptic curves $E/F$ where $F$ is the Hilbert class field of $K$ (in which $2$ splits) for any prime $p$ which splits in $K$. This paper, however, will only deal with elliptic curves with complex multiplication defined over $\Q$, and henceforth we will omit $F$ from the notation.\\

Iwasawa theory emerged as a study of the growth of of arithmetic objects in towers of field extensions which are unramified outside of $p$. We will take an alternative standpoint in this paper, and investigate the ``horizontal'' growth of arithmetic objects as the number of ramified primes grows, while keeping the degree of the field extensions fixed. Let $N>1$ be a cube-free integer prime to $3$, and let $\sqrt[3]{N}$ denote the real cube root. We will study a family of twists of the Fermat elliptic curve $X^3+Y^3=Z^3$ by the cubic extension $\Q(\sqrt[3]{N})/\Q$ whose equation in terms of the affine plane is given by
$$C_{N}: x^3+y^3=N.$$
They have complex multiplication by the ring of integers $\o$ of the imaginary quadratic field $K=\Q(\sqrt{-3})$. In the case when $N=2p^i$ with $i\in \{0,1\}$ and $p\equiv 2,5\bmod 9$, an odd prime number, the $3$-part of the Birch--Swinnerton-Dyer conjecture has recently been established in joint work with Y.~Li \cite{kl}. This was achieved by expressing Hecke $L$-functions in terms of Eisenstein series, studying congruences between $L$-values to obtain the ``explicit modulo $3$ Birch--Swinnerton-Dyer conjecture'', and combining with a result of Cai--Shu--Tian \cite{CST} which uses an explicit Gross--Zagier formula.  In \cite{kl}, we also related the $2$-part of the Tate--Shafarevich group $\Sha(C_{2p^i})[2]$ of $C_{2p^i}$ over $\Q$ to the $2$-part of the ideal class group $\mathrm{Cl}(L)[2]$ of $L=\Q(\sqrt[3]{p})$, following the descent argument of C.~Li \cite{cl}. This was used to find examples of elliptic curves of the form $C_{2p^i}$ with rank $1$ and non-trivial $\Sha(C_{2p^i})[2]$. In particular, it became apparent that the $2$-part of the Birch--Swinnerton-Dyer conjecture is much deeper in nature for these curves, coming from the fact that $\Sha(C_{2p^i})[2]$ is not necessarily trivial.\\ 

In this paper, we will study the $2$-adic and $3$-adic valuation of the algebraic part $L^{(\mathrm{alg})}(C_N,1)\in \Q$ of the central $L$-value when it does not vanish, in the case when $N$ is divisible by an arbitrary number of distinct primes, split or inert in $K$. It is well known that $C_N(\Q)_{\mathrm{tor}}$ is trivial for any cube-free integer $N>2$. Furthermore, we know from the theorem of Rubin \cite{rubin} that $\Sha(C_N)$ is finite when $L(C_N,1)\neq 0$. Thus the $p$-part of the Birch--Swinnerton-Dyer conjecture predicts the following:

\begin{conj}\label{pBSD}Let $N>2$ a cube-free integer, and assume $L(C_N,1)\neq 0$. Then for any prime number $p$, we have
$$\ord_p\left(L^{(\mathrm{alg})}(C_N,1)\right)=\ord_p\left(\#(\Sha(C_N)(p))\right)+\ord_p\left(\prod_{q \:\mathrm{ bad}}c_q\right),$$
where the product runs over the primes $q$ of bad reduction for $C_N$, and $c_q=[C_N(\Q_q): C_N^0(\Q_q)]$ is the Tamagawa factor of $C_N$ at $q$. Here, $C_N^0(\Q_q)$ denotes the subgroup of points with non-singular reduction modulo $q$.
\end{conj}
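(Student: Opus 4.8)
Conjecture~\ref{pBSD} will be attacked one prime $p$ at a time, since the tools available are of a completely different nature for $p\geq 5$, for $p=3$, and for $p=2$; in each case the plan is to combine an explicit evaluation of the algebraic $L$-value with control of the $p$-primary Selmer group, the bad-prime Tamagawa factors being computed once and for all by Tate's algorithm. For $p\geq 5$ the statement is known: by Deuring, $C_N$ has complex multiplication by $\o$ and $L(C_N,s)$ is the Hecke $L$-function of a Grossencharacter $\psi_N$ of $K$ of conductor dividing $3N\o$, and Rubin's theorem \cite{rubin} then establishes the case $p\geq 5$ of Conjecture~\ref{pBSD}, deduced from his Iwasawa-theoretic results over $K$ by a standard descent along $K/\Q$ which is harmless for odd $p$ because $C_N(\Q)_{\mr{tor}}$ is trivial. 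Tate's algorithm gives $c_q=3$ at each prime $q\mid N$ that is split in $K$ and prime to $6$, while $c_2$ and $c_3$ are powers of $2$; in particular $\prod_q c_q$ is prime to every $p\geq 5$, so the Tamagawa term contributes nothing in that range.

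The heart of the matter is the case $p=3$. One first needs a closed formula for $\ord_3\bigl(L^{(\mr{alg})}(C_N,1)\bigr)$. Following the circle of ideas of \cite{kl} and of Cai--Shu--Tian \cite{CST}, one writes $L(\psi_N,1)$, normalised by the real period of the curve $x^3+y^3=1$, as a finite combination of values of an Eisenstein series of weight one at $N$-division points, and then analyses how this expression changes when $N$ is multiplied by one further prime $q$, split or inert in $K$. An induction on the number of prime divisors of $N$, anchored at the base cases of \cite{kl}, yields
\[
\ord_3\bigl(L^{(\mr{alg})}(C_N,1)\bigr)=\ord_3\Bigl(\prod_{q\ \mr{bad}}c_q\Bigr)+m_3(N),
\]
where $m_3(N)$ is a quantity that reads off the distinct prime divisors of $N$ inert in $K$ --- precisely the relation announced in the abstract. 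It then remains to identify $m_3(N)$ with $\ord_3\#(\Sha(C_N)(3))$. The inequality $\leq$ comes from an Euler-system bound on $\Sha(C_N)(3)$ in the spirit of Rubin's work, this direction surviving although the full main conjecture is out of reach at $3$, and the reverse inequality from an explicit $3$-descent producing the requisite classes in $\Sha$, or, when the analytic rank forces it, from the explicit Gross--Zagier formula of \cite{CST}. When $N$ is a product of split primes one has $m_3(N)=0$, hence $\Sha(C_N)(3)=0$; and the ``perfect square'' assertion of the abstract then follows by checking that in this case the Eisenstein-series formula presents $L^{(\mr{alg})}(C_N,1)/\prod_q c_q$ as a perfect square, the contribution of each split prime appearing as a norm from $K$ to $\Q$.

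The case $p=2$ is treated by the complete $2$-descent of C.~Li \cite{cl}, implemented as in \cite{kl}: one reads the $2$-adic valuation of $L^{(\mr{alg})}(C_N,1)$ off the same Eisenstein-series identity, and relates the $2$-Selmer group --- hence $\Sha(C_N)[2]$, since $C_N(\Q)=C_N(\Q)_{\mr{tor}}$ is trivial in the rank-zero situation at hand --- to the $2$-torsion of the ideal class group of $\Q(\sqrt[3]{N})$ and of its companion fields, the complete descent then pinning down $\Sha(C_N)(2)$. Matching this class-group quantity with the $2$-adic $L$-value and the Tamagawa contribution $\ord_2(\prod_q c_q)$ finishes the case; the fact that the predicted order is a perfect square for split $N$ is consistent with the Cassels--Tate pairing on $\Sha(C_N)$ being alternating, which one verifies directly.

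\noindent\textbf{The main obstacle.}
The real difficulty is concentrated at $p=2$ and $p=3$, exactly the primes dividing $\#\o^\times=6$, where Rubin's complex-multiplication Iwasawa theory stops short of the main conjecture. There one must establish the \emph{exact} power of $p$ dividing $\#(\Sha(C_N)(p))$ --- both inequalities, not merely the Euler-system bound --- and propagate it uniformly through the induction on the number of ramified primes: the delicate point is to show that adjoining one new prime divisor $q$ to $N$ moves the $p$-adic $L$-value and the $p$-part of $\Sha(C_N)$ --- or, for $p=2$, the relevant ideal class group --- in exactly matching ways, all while keeping the Tamagawa bookkeeping correct. This interplay is also the source of the perfect-square phenomenon for $N$ a product of split primes: it reflects the symmetry of the Eisenstein-series formula under the non-trivial automorphism of $K=\Q(\sqrt{-3})$.
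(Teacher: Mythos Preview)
The statement you are attempting to prove is labelled as a \emph{conjecture} in the paper, and the paper does not claim to prove it. For $p\geq 5$ it is indeed a consequence of Rubin's theorem \cite{rubin}, as you say, and the paper records this in the remark immediately following the conjecture. But for $p\in\{2,3\}$ the conjecture is left open: the paper's actual contributions are one-sided results compatible with it, namely the lower bound $\ord_3\bigl(L^{(\mathrm{alg})}(C_N,1)\bigr)\geq k(N)-\e_{tD}$ of Theorem~\ref{main1}, the $3$-descent lower bound on $\Sha(C_N)[3]$ of Theorem~\ref{3descent}, the integrality statement of Theorem~\ref{integer}, and the perfect-square result of Corollary~\ref{corro}. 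None of these is an equality, and the introduction says explicitly that it ``remains to show this for $p=2$ and $p=3$''.

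Your sketch therefore overshoots in several concrete ways. First, you assert an \emph{exact} formula $\ord_3\bigl(L^{(\mathrm{alg})}(C_N,1)\bigr)=\ord_3\bigl(\prod_q c_q\bigr)+m_3(N)$ with $m_3(N)$ determined by the inert primes of $N$; the induction in the paper yields only the inequality $\geq$, and the numerical tables in Appendix~\ref{appendixB} show that the excess over the Tamagawa contribution is not a function of $r(N)$ alone (for instance $N=2\cdot 5\cdot 1657$ versus $N=2\cdot 5\cdot 109$). Second, you claim that an Euler-system upper bound on $\Sha(C_N)(3)$ ``survives'' at $p=3$; but $3\mid\#\bold{\mu}_K$ is exactly the obstruction in Rubin's argument, and the paper neither invokes nor proves any such bound---it uses only the descent lower bound of Satg\'e \cite{satge}. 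Third, your Tamagawa computation is wrong: by Lemma~\ref{tam} one has $c_3=3$ when $N\equiv 1,8\bmod 9$, so $c_3$ is not always a power of $2$, and this extra factor of $3$ is precisely what makes the split-prime case of Theorem~\ref{m1} sharp against the Tamagawa product. Finally, the perfect-square assertion for split $N$ is not obtained in the paper via any norm symmetry in an Eisenstein-series formula; it comes from combining the $3$-adic Tamagawa divisibility of Theorem~\ref{m1} with Rosu's independent theta-function identity \cite{rosu}, which already gives $\mathcal{S}_N=k^2/3^{2\ell}$ for some integers $k,\ell$.
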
 

\begin{remark} We repeat that the above is known for $p>3$ from \cite{rubin}, since the group of roots of unity $\bold{\mu}_K$ of $K=\Q(\sqrt{-3})$ has order $6$. Thus it remains to show this for $p=2$ and $p=3$.
\end{remark}

We will study the following three families of curves separately:
\begin{align*}
E_{D}&: y^2=x^3-2^4 3^3D^2\\
E_{2D}&: y^2=x^3-3^3D^2\\
E_{4D}&: y^2=x^3-2^23^3D^2,
\end{align*}
where $D$ is a cube-free product of $n$ distinct primes with $(D,6)=1$. We see that $E_{N}$ has $C_{N}$ as a birationally equivalent form.\\

 In Section \ref{section2} of this paper, we will show the following:
 
 \begin{thm}\label{m1} Let $N>1$ be a cube-free integer with $(N,3)=1$. Let $k(N)$ be the number of distinct prime factors of $N$. Then we have
 \begin{align*}\ord_3\left(L^{(\mathrm{alg})}(C_{N},1)\right)\geq \left\{ \begin{array}{ll}
 k(N)&\mbox{if $N$ is a product of split primes in $K$} \\
 k(N)-1&\mbox{otherwise.}\\
       \end{array} \right.
 \end{align*}
 \end{thm}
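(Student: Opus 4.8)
The plan is to reduce the theorem to a $\sqrt{-3}$-adic divisibility estimate for an explicit finite cubic character sum. There is nothing to prove when $L(C_N,1)=0$, so assume $L(C_N,1)\neq 0$. By Deuring's theorem we identify $L(C_N,s)=L(\psi_N,s)$, where $\psi_N$ is a Hecke character of $K=\Q(\sqrt{-3})$ of infinity type $(1,0)$ whose conductor $\mf{f}_N$ is supported on the prime $\langle\sqrt{-3}\rangle$ and on the primes dividing $N$. Since $C_N$ is the twist of the Fermat curve $C_1\colon x^3+y^3=1$ by the cubic extension $\Q(\sqrt[3]{N})/\Q$, one has $\psi_N=\psi_1\cdot\vp_N$, where $\psi_1$ is the canonical Hecke character attached to $C_1$ and $\vp_N$ is, up to a controlled correction at $\langle\sqrt{-3}\rangle$, the cubic residue character modulo $N$ over $K$; in particular each rational prime dividing $N$ contributes exactly one cubic character to $\vp_N$.

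The key input is a Damerell-type formula for the algebraic part of the central value, in the form already exploited in the author's joint work with Y.~Li. Expanding $\vp_N$ in its Fourier series and evaluating the resulting partial Hecke $L$-values of $\psi_1$ at the CM point attached to $C_1$, one arrives at an identity of the shape
$$L^{(\mr{alg})}(C_N,1)=c_N\cdot\N_{K/\Q}(S_N),\qquad S_N=\sum_{a\in(\o/\mf{f}_N)^*/\o^*}\ovl{\vp_N}(a)\,\mc{E}(a),$$
where $\mc{E}$ is a suitably normalised weight-one Eisenstein series (equivalently, a value of a Weierstrass-$\z$-type function attached to $C_1$) whose relevant values are algebraic numbers integral away from $2$; one checks $S_N\in\o$; and $c_N\in\Q^*$ is an explicit constant with $\ord_3(c_N)=-1$, reflecting $L^{(\mr{alg})}(C_1,1)=\tfrac13$ together with the behaviour of $\psi_N$ at $\langle\sqrt{-3}\rangle$. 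Since $\ord_3(\N_{K/\Q}(S_N))=\ord_{\langle\sqrt{-3}\rangle}(S_N)$, the theorem is equivalent to the statement that $\ord_{\langle\sqrt{-3}\rangle}(S_N)\geq k(N)+1$ when every prime factor of $N$ splits in $K$, and $\geq k(N)$ otherwise.

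To establish this, I would reduce $S_N$ modulo the successive powers of $\sqrt{-3}$ by means of congruences for $\mc{E}$: modulo $\sqrt{-3}^{\,j}$ the function $a\mapsto\mc{E}(a)$ agrees with a fixed polynomial whose degree grows slowly with $j$, so modulo $\sqrt{-3}^{\,j}$ the sum $S_N$ becomes a sum of the product character $\ovl{\vp_N}=\prod_{p\mid N}\ovl{\vp_p}$ against low-degree monomials. By the Chinese Remainder Theorem this factors into local sums, one for each $p\mid N$; and since the cubic residue symbol is $\equiv 1$ modulo $\sqrt{-3}$, each local sum degenerates modulo $\sqrt{-3}$ into an elementary power sum over the unit group of the residue ring at $p$. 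A nontrivial cubic character forces the accompanying monomial to have positive degree before such a sum can fail to vanish, so every one of the $k(N)$ prime factors of $N$ consumes part of the available degree; tracking this gives $\ord_{\langle\sqrt{-3}\rangle}(S_N)\geq k(N)$. The extra power of $\sqrt{-3}$ in the totally split case comes from a further congruence available only when all $p\mid N$ satisfy $p\equiv 1\bmod 3$ (equivalently, split in $K$): the power sums over $\Fp^*$ that arise are then themselves subject to a congruence modulo $3$, whereas for an inert prime $q$ (so $q\equiv 2\bmod 3$, and note that $q=2$ is inert in $K$) this additional divisibility is not available, so a single inert prime factor already removes the extra power.

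The main obstacle is exactly this last bookkeeping: one must prove the congruences for $\mc{E}$ modulo $\sqrt{-3}^{\,j}$ with sharp enough control of the degree, and then match them against the product of $k(N)$ cubic characters so as to recover precisely $k(N)$ rather than something weaker, all the while tracking the split/inert asymmetry responsible for the $+1$. Closely linked to this is the determination of the constant $c_N$ and of the conductor $\mf{f}_N$ at $\langle\sqrt{-3}\rangle$ across the three families $E_D$, $E_{2D}$, $E_{4D}$, since an error there shifts $\ord_3\!\left(L^{(\mr{alg})}(C_N,1)\right)$ by a unit; in particular one has to check that the inert prime $2$ behaves like every other inert prime, and that a product of split primes genuinely gains the extra factor of $\sqrt{-3}$.
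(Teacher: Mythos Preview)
Your outline takes a genuinely different route from the paper, and the gap you flag at the end is real and, I think, fatal as stated. The paper does \emph{not} attempt to estimate a single character sum $S_N$ by polynomial congruences for the Eisenstein values $\mc{E}(a)$; no such congruence is proved or used, and I do not see how to establish one with sharp enough degree control. Relatedly, your CRT step is more delicate than you suggest: even after a polynomial approximation, the monomials $a^k$ do not factor across the decomposition $(\o/\mf{f}_N)^\times\simeq\prod_p(\o/\mf{p})^\times$ in a way that makes the sum split into local pieces, so the claimed ``each prime consumes one degree'' bookkeeping does not go through.

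What the paper actually does is Zhao's averaging trick. Rather than bounding a single $S_N$, one forms
\[
\Phi_{tD}=\sum_{\alpha\in(\Z/3\Z)^n}\frac{L_S(\ovl{\psi}_{tD_\alpha},1)}{\Omega_t},
\]
summing over all cubic twists with the same radical. Character orthogonality then collapses the Eisenstein-series sum onto the subset $V=\{c:\ (\frac{p}{\mf{b}})_3=1\ \text{for all }p\mid D\}$, gaining an explicit factor $3^n$. The remaining sum over $V$ is evaluated, not approximated: the addition formula for the Weierstrass $\zeta$-function together with the exact values $\wp(\Omega_t/3)=3\sqrt[3]{t^2}$, $\wp'(\Omega_t/3)=9t$ reduces everything to $\sum_{c\in V}\frac{1}{1-\wp(c\Omega/M,\mc{L})}$, and one invokes the precise valuation $\ord_3\!\bigl(1-\wp(c\Omega/M,\mc{L})\bigr)=\tfrac13$. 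The split/inert asymmetry arises not from power-sum congruences but from two concrete places: the size of $V$ (one has $3\mid\#V$ unless $t=1$ and every $p\mid D$ is inert), and the $3$-divisibility of the omitted Euler factors at split primes (where $\ord_3((\pi-1)/\pi)\geq 1$ because $\pi\equiv 1\bmod 3\o$). Finally the individual $L(\ovl{\psi}_{tD},1)$ is extracted from $\Phi_{tD}$ by twisting with characters $\chi$ of $(\Z/3\Z)^n$, an induction on $n$, and comparison of primitive versus imprimitive $L$-values through their Euler factors. Note also that in the paper's framework the prime $2$ is handled by changing the family ($t\in\{1,2,4\}$), not by treating $2$ as one of the primes in the character sum; your remark that ``$2$ behaves like every other inert prime'' is not how the argument is organised.
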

 
  \begin{remark}When $N$ is a product of split primes of the form $p\o=(\pi)(\ovl{\pi})$ with $\pi$ congruent to $1$ modulo $6$, a lower bound of $k(N)-1$ has already been obtained in \cite{qiu}. This was improved to $k(N)+1$ for $N$ a product of ``cubic-special'' primes, which occupy density $\frac{2}{3}$ of primes congruent to $1$ modulo $27$. See \cite{yuka} for more details. In this paper, we will follow the methods in \cite{yuka}, which is an adaptation to cubic twists of $C_1\colon x^3+y^3=1$ of Zhao's method \cite{zhao} for quadratic twists of the curves $y^2=x^3-x$. 
 \end{remark}
 
 \begin{remark} Given a cube-free integer $N$ with $(N,3)=1$, let $r(N)$ and $s(N)$ be the number of distinct prime factors dividing $N$ which are inert and split in $K$, respectively. In particular, we have $k(N)=r(N)+s(N)$. A computation of Tamagawa numbers of $C_N$ (see Lemma \ref{tam}) gives 
  \begin{align*}\ord_3\left(\prod_{q \mathrm{ bad}}c_q\right) = \left\{ \begin{array}{ll}
s(N)+1&\mbox{if $N\equiv 1,8 \bmod 9$} \\
 s(N)&\mbox{otherwise.}
       \end{array} \right.
 \end{align*}
Let $t(N)=1$ if $N\equiv \pm 1\bmod 9$ and $t(N)=0$ if $N\equiv \pm 2$ or $\pm 4 \bmod 9$. To summarise, Conjecture \ref{pBSD} predicts that when $L(C_N,1)\neq 0$,  
   \begin{enumerate}
  \item $\Sha(C_N)[3]$ is non-trivial when $r(N)\geq 2+t(N)$.
  \item $\Sha(C_N)[3]$ is trivial when $r(N)\leq 1+ t(N)$ and equality holds in Theorem \ref{m1}.
 \end{enumerate}
Furthermore, by a root number computation (see Proposition \ref{lroot}), we see that the sign of the global root number $\epsilon(C_N/\Q)$ depends on the parity of $r(N)$ and $t(N)$. To be more precise, we have
 $$\epsilon(C_N/\Q)=-(-1)^{t(N)}(-1)^{r(N)}.$$
 \end{remark}
 
\vspace{10pt}
 
We check in Section \ref{section2a} that the above prediction is indeed consistent with a bound given by a $3$-descent:
\begin{thm}\label{3descent} Let $C_{N}: x^3+y^3=N$, where $N> 1$ is a cube-free integer. Then the $3$-part of the Tate--Shafarevich group $\Sha(C_{N})[3]$ of $C_{N}$ over $\Q$ satisfies
$$\ord_3\left(\#\left(\Sha(C_{N})[3]\right) \right)\geq r(N)-t(N)-1-\rank(C_N),$$
where $r(N)$ is the number of distinct primes dividing $N$ which are inert in $\Q(\sqrt{-3})$, $t(N)=1$ if $N\equiv \pm 1 \bmod 9$, $t(N)=-1$ if $\ord_3(N)=1$ and $t(N)=0$ otherwise. 
\end{thm}

Note that it is possible to have non-trivial $\Sha(C_N)[3]$ when $r(N)\leq 1+t(N)$. When $N= 2\cdot 5\cdot 1657$, for example, we have $r(N)=2=1+t(N)$ and $\Sha(C_N)[3]=(\Z/3\Z)^2$. More numerical examples, obtained using Magma \cite{magma} and Sage \cite{sage}, are listed in Appendix \ref{appendixB}. This curious relation between $\Sha(C_N)[3]$ and the number of distinct inert prime divisors of $N$ will be studied in more detail in a subsequent paper.\\

We next study the invariant 
$$\mathcal{S}_N=\frac{L^{(\mathrm{alg})}(C_N,1)}{\prod_{q \mathrm{ bad}}c_q}.$$ 
This is defined so that if $L(C_N,1)\neq 0$, the full Birch--Swinnerton-Dyer conjecture predicts 
$$\mathcal{S}_N=\#(\Sha(C_N))$$
for $N>2$. Rosu showed in \cite{rosu} that if $N$ is a cube-free product of split primes, $\mathcal{S}_N$ is a perfect square up to a denominator of an even power of $3$, and gives a formula for its value in terms of theta functions. The Tamagawa number divisibility in Theorem \ref{m1} gives the following strengthening of \cite[Theorem 1.1--Theorem 1.3]{rosu}.

\begin{cor}\label{corro}Let $N>2$ be a cube-free positive integer with $(N,3)=1$. Then 
\begin{enumerate}
\item $\mathcal{S}_N$ is a perfect square if $N$ is a product of split primes.
\item $2\mathcal{S}_N\in \Z$ if $N\equiv 2,7\bmod 9$.
\item $\mathcal{S}_N\in \Z$ otherwise.
\end{enumerate} 
Furthermore, we have the trace formula
$$\mathcal{S}_N=\frac{1}{3\prod_{q \mathrm{ bad}}c_q}\mathrm{Tr}_{K(j(\o_{3N}))/K}\left(\sqrt[3]{N}\frac{\Theta_K(N\omega)}{\Theta_K(\omega)}\right)$$
where $\Theta_K(z)=\sum_{a,b\in \Z}e^{2\pi i z (a^2+b^2-ab)}$ is the theta function of weight one associated to $K$, $\omega=\frac{-1+\sqrt{-3}}{2}$ is a cube root of unity and $K(j(\o_{3N}))$ is the ring class field of the order $\o_{3N}=\Z+3N\o$.
\end{cor}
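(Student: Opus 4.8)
The plan is to reduce all three assertions together with the trace formula to the single $3$-adic inequality $\ord_3(\mathcal{S}_N)\geq 0$, and then to establish that inequality by combining Theorem \ref{m1} with the Tamagawa number computation of Lemma \ref{tam}. We may assume throughout that $L(C_N,1)\neq 0$: otherwise $L^{(\mathrm{alg})}(C_N,1)=0$, hence $\mathcal{S}_N=0$, so that (1)--(3) hold trivially and, by Rosu's evaluation of $L^{(\mathrm{alg})}(C_N,1)$ recalled below, the right-hand side of the trace formula also vanishes. Under this assumption the global root number satisfies $\epsilon(C_N/\Q)=+1$ (otherwise the functional equation of $L(C_N,s)$ would force $L(C_N,1)=0$), so by the root number formula of Proposition \ref{lroot} the integer $r(N)+t(N)$ is odd; here $t(N)=1$ exactly when $N\equiv 1,8\bmod 9$, which is compatible with the earlier convention because $(N,3)=1$.

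Next I would quote the work of Rosu \cite{rosu}, which supplies everything in the statement away from the prime $3$. Concretely, Rosu establishes the identity
$$L^{(\mathrm{alg})}(C_N,1)=\frac13\,\mathrm{Tr}_{K(j(\o_{3N}))/K}\!\left(\sqrt[3]{N}\,\frac{\Theta_K(N\omega)}{\Theta_K(\omega)}\right),$$
and dividing by $\prod_{q}c_q$ yields the asserted trace formula for $\mathcal{S}_N$; moreover, in case (1) he shows that $3^{2m}\mathcal{S}_N=c^2$ for some $c\in\Z$ and some $m\in\Z_{\geq 0}$, while in cases (2) and (3) he shows respectively that $2\mathcal{S}_N$ and $\mathcal{S}_N$ lie in $\Z[1/3]$. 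Granting this, it remains only to prove $\ord_3(\mathcal{S}_N)\geq 0$. Indeed, in case (1) this forces $3^m\mid c$, whence $\mathcal{S}_N=(c/3^m)^2$ is a perfect square in $\Z$; and in cases (2) and (3) it removes the power of $3$ from the denominator, giving $2\mathcal{S}_N\in\Z$, respectively $\mathcal{S}_N\in\Z$.

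To prove $\ord_3(\mathcal{S}_N)\geq 0$, write $\ord_3(\mathcal{S}_N)=\ord_3\bigl(L^{(\mathrm{alg})}(C_N,1)\bigr)-\ord_3\bigl(\prod_{q}c_q\bigr)$. By Lemma \ref{tam} the second term equals $s(N)+t(N)$, and by Theorem \ref{m1} the first term is at least $s(N)$ when $r(N)=0$ and at least $r(N)+s(N)-1$ when $r(N)\geq 1$. If $r(N)=0$, we obtain $\ord_3(\mathcal{S}_N)\geq -t(N)\geq -1$; since $3^{2m}\mathcal{S}_N$ is a perfect square, $\ord_3(\mathcal{S}_N)$ is even, so in fact $\ord_3(\mathcal{S}_N)\geq 0$. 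If $r(N)\geq 1$ and $t(N)=0$, we obtain $\ord_3(\mathcal{S}_N)\geq r(N)-1\geq 0$. If $r(N)\geq 1$ and $t(N)=1$, we obtain $\ord_3(\mathcal{S}_N)\geq r(N)-2$, which is $\geq 0$ as soon as $r(N)\geq 2$; and $r(N)=1$ cannot occur here, because then $r(N)+t(N)=2$ is even, contradicting $\epsilon(C_N/\Q)=+1$. This exhausts all cases, so $\ord_3(\mathcal{S}_N)\geq 0$ and the corollary follows.

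I expect the real work to be organizational rather than a single hard step: one must match the three regimes of the corollary against the number $r(N)$ of inert prime divisors and against $N\bmod 9$, and then see precisely which ingredient closes each borderline case. The two genuinely tight situations are $N$ a product of split primes with $N\equiv 1\bmod 9$, where the unaided bound yields only $\ord_3(\mathcal{S}_N)\geq -1$ and one must invoke the evenness coming from Rosu's square statement, and $N\equiv \pm1\bmod 9$ with exactly one inert prime divisor, where one must instead invoke the vanishing of $L(C_N,1)$ forced by the root number. A secondary point to verify is that Rosu's theta-function evaluation of $L^{(\mathrm{alg})}(C_N,1)$, together with his integrality results underlying (2) and (3), are available for all cube-free $N$ with $(N,3)=1$ and not merely for products of split primes, so that the trace formula and assertions (2), (3) hold in the stated generality.
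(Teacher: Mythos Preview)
Your argument is correct and follows essentially the same route as the paper: combine Theorem~\ref{m1} with the Tamagawa computation of Lemma~\ref{tam} to control $\ord_3(\mathcal{S}_N)$, invoke Rosu's square formula for case~(1), and dispose of the borderline case $r(N)=t(N)=1$ via the root number from Proposition~\ref{lroot}. Your parity trick ($\ord_3(\mathcal{S}_N)$ even in the split case) is exactly the paper's observation that a rational square lying in $\tfrac{1}{3}\Z$ must lie in $\Z$.

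The one substantive difference is the source of integrality away from~$3$ in cases~(2) and~(3). You attribute to Rosu the statement that $2\mathcal{S}_N$ (respectively $\mathcal{S}_N$) lies in $\Z[1/3]$; the paper instead uses the integrality $L^{(\mathrm{alg})}(C_N,1)\in\Z$ from Stephens~\cite{stephens} (reproved in Section~\ref{section3}), so that the only possible denominator in $\mathcal{S}_N$ comes from $\prod_q c_q$, which by Lemma~\ref{tam} involves only the primes~$2$ and~$3$. This is a safer citation: Rosu's square statement (Theorem~1.2 in \cite{rosu}) is specific to the split case, and it is cleaner not to rely on his results for the generic integrality in~(2) and~(3) when Stephens already gives $L^{(\mathrm{alg})}(C_N,1)\in\Z$ directly. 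You rightly flagged this point yourself.
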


When $N$ is a product of split primes, a more detailed description of $\mathcal{S}_N$ in terms of theta functions of weight $1/2$ can be found in \cite[Theorem 1.2]{rosu}.\\

In Section \ref{section3}, we study the $2$-adic valuation of $L^{(\mathrm{alg})}(C_N,1)$, which readily yields the following when combined with Theorem \ref{m1} and the $p$-part of the Birch--Swinnerton-Dyer conjecture known for $p>3$.
 
\begin{thm}\label{m2}
Let $N>2$ be a cube-free integer with $(N,3)=1$. Then we have
$$L^{(\mathrm{alg})}(C_{N}, 1)\in \Z.$$
\end{thm}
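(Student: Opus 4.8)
The statement asserts that a rational number is an integer, so it suffices to prove $\ord_p(L^{(\mathrm{alg})}(C_N,1))\geq 0$ for every prime $p$; we may assume $L(C_N,1)\neq 0$, since otherwise $L^{(\mathrm{alg})}(C_N,1)=0$. For $p\geq 5$ this is immediate from Rubin's theorem \cite{rubin}: the $p$-part of the Birch--Swinnerton-Dyer conjecture (Conjecture \ref{pBSD}) holds for $C_N$, and since $C_N(\Q)_{\mathrm{tor}}$ is trivial for $N>2$, it reads $\ord_p(L^{(\mathrm{alg})}(C_N,1))=\ord_p(\#\Sha(C_N)(p))+\ord_p\!\bigl(\prod_{q\,\mathrm{bad}}c_q\bigr)\geq 0$. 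For $p=3$, Theorem \ref{m1} gives $\ord_3(L^{(\mathrm{alg})}(C_N,1))\geq k(N)-1\geq 0$, because $N>1$ has at least one prime factor. Hence the statement reduces to showing that $L^{(\mathrm{alg})}(C_N,1)$ is $2$-integral, which is what Section \ref{section3} provides.

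To control $\ord_2(L^{(\mathrm{alg})}(C_N,1))$ I would run the same machinery that produces Theorem \ref{m1}, namely the adaptation to cubic twists (as in \cite{yuka}) of Zhao's method \cite{zhao}. The steps are: (i) identify $L(C_N,s)$ with the Hecke $L$-function $L(\bar\psi_N,s)$ of the Grossencharacter of $K$ attached to $C_N$; (ii) use a Damerell-type evaluation --- equivalently, the expansion of the relevant weight-one Eisenstein series, or the theta expression of Corollary \ref{corro} --- to write $L^{(\mathrm{alg})}(C_N,1)$ as an explicit finite sum over residues modulo $N$ (resp.\ $3N$) of elementary quantities built from the CM lattice of $C_N$; (iii) factor this sum as a convolution over the prime divisors $q$ of $N$, so that its $2$-adic valuation decomposes as a sum of local contributions; (iv) compute each local factor --- a Jacobi-sum-type expression --- $2$-adically, according to the splitting type of $q$ in $K$ and the residue of $q$ modulo small powers of $2$; (v) assemble, keeping careful track of the real period against which $L(C_N,1)$ is normalised, to obtain $\ord_2(L^{(\mathrm{alg})}(C_N,1))\geq 0$. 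In fact the same computation should yield a lower bound of the same shape as Theorem \ref{m1}, from which Theorem \ref{m2} follows at once.

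The main obstacle is the prime $2$ itself: Rubin's Iwasawa-theoretic argument is unavailable since $2\mid \#\bold{\mu}_K=6$, so no part of the Birch--Swinnerton-Dyer conjecture may be invoked and the inequality has to be extracted by hand from the explicit $L$-value. Two features make this delicate. First, the normalisation: one must pin down precisely the real period dividing $L(C_N,1)$, accounting for the fixed factors $2^4 3^3$, $3^3$, $2^2 3^3$ in the models $E_D$, $E_{2D}$, $E_{4D}$ and for the additive reduction of $C_N$ at $2$ when $2\mid N$, since an error of a power of $2$ here would falsify the statement. Second, the $2$-adic bookkeeping of the finite sum: unlike the mod-$3$ situation, where a short congruence argument suffices, the $2$-adic valuations of the local Jacobi-sum factors respond to several congruence conditions at once, so the estimate splits into cases according to $N\bmod 8$ and the factorisation types of the prime divisors of $N$, and $2$-integrality must be verified in each. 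Once the explicit formula and its correct normalisation are in place, however, the remaining estimate is a finite --- if laborious --- verification.
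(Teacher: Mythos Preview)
Your first paragraph is correct and matches the paper exactly: the statement reduces to $\ord_2(L^{(\mathrm{alg})}(C_N,1))\geq 0$ via Rubin for $p\geq 5$ and Theorem \ref{m1} for $p=3$. Your high-level plan for $p=2$ --- reuse the averaging machinery behind Theorem \ref{m1}, i.e.\ express the (imprimitive) $L$-values through the Eisenstein series of Goldstein--Schappacher and study the sum $\Phi_{tD}=\sum_\alpha L_S(\bar\psi_{tD_\alpha},1)/\Omega_t$ --- is also the paper's plan.

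Where your proposal diverges is in steps (iii)--(v). You describe a ``convolution over the prime divisors of $N$'' with ``Jacobi-sum-type'' local factors and a case analysis according to $N\bmod 8$. The paper does none of this, and it is not clear your description corresponds to an argument that actually closes. What the paper does instead is: after the same manipulations as in the $3$-adic case (orthogonality over $(\Z/3\Z)^n$, restriction to $V$, the addition formula for $\zeta$ and $\wp$, and the symmetry $c\leftrightarrow -c$), the sum $\Phi_{tD}$ reduces to $\sqrt[3]{t}\cdot\tfrac{1}{2}\sum_{c\in V}(\ast)\,\dfrac{9\epsilon}{3-\wp(c\Omega_1/D,\mc{L}_1)}$ (plus, for $t=1$, the term $-\epsilon\#(V)$). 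The entire $2$-adic content is then the single claim that
\[
\ord_2\!\bigl(3-\wp(c\Omega_1/D,\mc{L}_1)\bigr)=0,
\]
which is proved by a formal-group argument: on the minimal model $Y^2+Y=X^3-7$ (good at $2$), the quantity $3-\wp(c\Omega_1/D,\mc{L}_1)$ is $X(Q)-X(P)$ with $Q$ a $3$-torsion point and $P$ a $D$-torsion point; if it had positive $2$-adic valuation then $P\pm Q$ would lie in the kernel of reduction mod $2$, contradicting that prime-to-$2$ torsion injects into the reduction. Combined with the parity $c\leftrightarrow -c$ (which absorbs the outer $\tfrac{1}{2}$), this gives $\ord_2(\Phi_{tD})\geq 0,\,-\tfrac{2}{3},\,-\tfrac{1}{3}$ for $t=1,2,4$, and the induction of Theorem \ref{main1} then isolates each summand. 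No Jacobi sums, no local factorisation over the primes of $N$, and no $N\bmod 8$ case split are involved; the missing idea in your sketch is precisely this formal-group/good-reduction-at-$2$ input.
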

We note that this is already known from \cite[Theorem 1]{stephens} which uses class field theory and the methods of \cite{bsd2}, and Corollary \ref{corro} even shows that $\frac{L^{(\mathrm{alg})}(C_{N}, 1)}{\prod_{q \mathrm{ bad}}c_q} \in \Z$ when $N\not\equiv 2,7\bmod 9$. But we will provide another proof by directly studying the $2$-adic valuation, which is expected to be useful in showing the Tamagawa number divisibility for quadratic or sextic twists of the curve $y^2=x^3-2^43^4$. It also gives an insight into how $L^{(\mathrm{alg})}(C_{N}, 1)$ can be divisible by a large power of $2$ in the case of quadratic or sextic twists.\\

Finally, if the central $L$-value does not vanish, we expect that Iwasawa theory can be used to study the $2$-part of the Birch--Swinnerton-Dyer conjecture for the curves $C_{2p^i}$ studied in \cite{kl}. This would give us the full Birch--Swinnerton-Dyer conjecture for $C_{2p^i}$.

\subsection*{Acknowledgement} 
I would like to thank Ashay Burungale for insightful conversations on the topics of this paper and drawing my attention to the work of Rosu. I would also like to thank the referee for carefully reading the manuscript and giving valuable comments and suggestions. Furthermore, I am grateful to the Max Planck Institute for Mathematics in Bonn for its support and hospitality. 

\section{The $3$-adic valuation of the algebraic part of central $L$-values}\label{section2}

Given a cube-free integer $N>1$ with $(N,3)$, we define an elliptic curve
$$C_N: x^3+y^3=N.$$
Then $C_N$ has complex multiplication by the ring of integers $\o$ of the imaginary quadratic field $K=\Q(\sqrt{-3})$. From now on, we will write $N=tD$ where $t=1, 2$ or $4$ and $2\nmid D$. Suppose $D$ has $n$ distinct prime factors, say $p_1,\ldots, p_n$. Given any $\alpha=(\alpha_1,\ldots , \alpha_n)\in \{0,1, 2\}^n$, we define a cube-free product $D_\alpha=p_1^{\alpha_1}\cdots p_n^{\alpha_n}$. If $p_i$ is inert in $K$, then we have $p_i\equiv 2\bmod 3$. If $p_i$ splits in $K$, say $p_i\o=\mf{p}_i\ovl{\mf{p}_i}$, then $p_i\equiv 1\bmod 3$, and we may choose a generator $\pi_i$ of $\mf{p}_i$ so that $\pi_i\equiv \ovl{\pi}_i\equiv 1\bmod 3\o$. We define $E_{tD_\alpha}$ by the equations
\begin{align}\label{eqE}
E_{D_\alpha} &\colon y^2=x^3- 2^4 3^3 D_\alpha^2 \nonumber\\
E_{2D_\alpha} &\colon y^2=x^3- 3^3 D_\alpha^2\\
E_{4D_\alpha} &\colon y^2=x^3- 2^2 3^3 D_\alpha^2 \nonumber.
\end{align}
Note that $E_{tD_\alpha}$ is birationally equivalent to $C_{tD_\alpha}$. Furthermore, we may consider $\alpha=(\alpha_1,\ldots ,\alpha_n)$ as an element of $(\Z/ 3 \Z)^n$, since $E_{tD_{\alpha'}}$ and $E_{tD_{\alpha}}$ are isomorphic over $\Q$ if $\alpha'\equiv \alpha\bmod 3$. Let $A$ be the elliptic curve given by the classical Weierstrass equation
$$A: y^2=4x^3-1,$$
and let $\Omega=3.059908074\ldots $ denote the real period associated to the differential $\frac{\mathrm{d}x}{y}$ on $A$. We set $\Omega_N=\frac{\Omega}{\sqrt{3}\sqrt[3]{N}}$, and let $\mc{L}_N=\Omega_N \o$ denote the corresponding period lattice. In particular, we have $\Omega_1=1.766638750\ldots$ and $\Omega_t=\frac{\Omega_1}{\sqrt[3]{t}}$. We define the algebraic part of $L(C_N,1)$ by
\begin{equation}\label{periods}L^{(\mathrm{alg})}(C_N,1)=\frac{L(C_N,1)}{\Omega_N},
\end{equation}
which is well known to be a rational number (see, for example, \cite{gol-sch}).  We write $\psi_{N}$ for the Grossencharacter of $C_N/K$, and fix an embedding $K\hookrightarrow \C$. Since $C_N$ is defined over $\Q$, we may identify $L(C_N,s)$ with the Hecke $L$-function $L(\ovl{\psi}_{N},s)$ where $\ovl{\psi}_{N}$ denotes the complex conjugate of $\psi_N$ (see, for example, \cite[Theorem 45]{coates1}). Given an integer $m$, the radical $\mathrm{rad}(m)$ of $m$  is the product of distinct primes dividing $m$. Then the conductor of $\psi_{t}$ divides $3\mathrm{rad}(t) \o$, and the conductor $\psi_{tD}$  divides $\mf{f}=(f)$ where $f=3\mathrm{rad}(tD)$ (see \cite{stephens}). Let $C_{tD_\alpha,\mf{f}}=\ker\left(C_{tD_\alpha}\xto{[f]}C_{tD_\alpha}\right)$. Then by \cite[Proposition 48]{coates1}, $K(C_{tD_\alpha,\mf{f}})$ coincides with the ray class field $K(\mf{f})$ of $K$ modulo $\mf{f}$.  Writing $M=f/3=\mathrm{rad}(tD)$ and $\widetilde{\bold{\mu}}_6$ for the image of the group of roots of unity in $K$ under reduction modulo $\mf{f}$, we have 
\begin{align*}
 (\o/M\o)^\times&\simeq (\o/f\o)^\times /\widetilde{\bold{\mu}}_6\\
&\simeq \Gal(K(\mf{f})/K),
\end{align*}
where the first isomorphism will be fixed later, and the second isomorphism is given by the Artin map. Thus, writing $\Delta=\mathrm{rad}(D)=p_1\cdots p_n$, the Chinese remainder theorem gives
$$
\Gal(K(\mf{f})/K) \simeq \left\{ \begin{array}{ll}
 (\o/\Delta\o)^\times &\mbox{if $t=1$} \\
 (\o/2\o)^\times \times (\o/\Delta\o)^\times &\mbox{otherwise.}\\
       \end{array} \right.
$$

 Let $S$ be the set of integral primes in $K$ dividing $f$. Given any $\alpha\in (\Z/3\Z)^n$, we denote by $L_S(\ovl{\psi}_{tD_\alpha},s)$ the (usually imprimitive) Hecke $L$-function obtained by omitting the Euler factors at the primes $v$ in $S$. We will first express the Hecke $L$-functions in terms of a sum of Eisenstein series following the results of Goldstein and Schappacher \cite{gol-sch}. Recall that the Kronecker--Eisenstein series in complex variables $z$ and $s$ is defined, for any complex lattice $\mc{L}$ and any integer $k\geq 1,$ by
$$H_k(z,s,\mc{L})=\sum_{w\in \mc{L}}\frac{(\ovl{z}+\ovl{w})^k}{\;|z+w|^{2s}},$$
where the sum is taken over $w$ in $\mc{L}$, except $-z$ if $z\in \mc{L}$. The series converges in the half plane $\mathrm{Re}(s)>1+\frac{k}{2}$, and it has analytic continuation to the whole complex $s$-plane (see \cite[Theorem 1.1]{gol-sch}). The non-holomorphic Eisenstein series $\mc{E}_1^*(z,\mc{L})$ is defined by
$$\mc{E}_1^*(z,\mc{L})=H_1(z,1,\mc{L}).$$
We also recall that for $a\in K^*$ and an ideal $\mf{b}$ of $K$ coprime to $3a$, the cubic residue symbol $\left(\frac{a}{\mf{b}}\right)_3$ is defined by the equation 
$$(\sqrt[3]{a})^{\tau_\mf{b}}=\left(\frac{a}{\mf{b}}\right)_3 \sqrt[3]{a},$$
where $\tau_\mf{b}=(\mf{b}, K(\sqrt[3]{a})/K)$ denotes the Artin symbol of $\mf{b}$ in $\Gal(K(\sqrt[3]{a})/K)$. Given two elements $a, b\in K^*$ with $b$ coprime to $3a$, we also write
$$\left(\frac{a}{b}\right)_3=\left(\frac{a}{(b)}\right)_3,$$
and we have $\left(\frac{a}{b}\right)_3=\left(\frac{b}{a}\right)_3$ if in addition $a, b\equiv \pm 1\bmod 3\o$.

Let $\mc{B}$ be any set of integral primes of $K$ prime to $\mf{f}$ such that the Artin symbol $\sigma_\mf{b}$ of $\mf{b}$ in $K(\mf{f})/K$ runs over the Galois group $\Gal(K(\mf{f})/K)$ precisely once as $\mf{b}$ runs over $\mc{B}$. Then by \cite[Proposition 5,5]{gol-sch}, we have
\begin{align*}\frac{L_S(\ovl{\psi}_{tD_\alpha},1)}{\Omega_{tD_\alpha}}&=\frac{1}{f}\sum_{\mf{b}\in \mc{B}}\mc{E}^*_1\left(\frac{\Omega_{tD_\alpha}}{f}, \mc{L}_{tD_\alpha}\right)^{\sigma_\mf{b}}.
\end{align*}

Now, we have $\Omega_{tD_\alpha}=\frac{\Omega_t}{\sqrt[3]{D_\alpha}}$ and $\mathcal{E}^*_1$ is homogeneous of degree $-1$. It follows that for any $\alpha\in (\Z/3 \Z)^n$, we have
\begin{align*}
\frac{L_S(\ovl{\psi}_{tD_\alpha},1)}{\Omega_t}&=\frac{1}{f}\sum_{\mf{b}\in \mc{B}}(D_\alpha^2)^\frac{\sigma_\mf{b}-1}{6}\mc{E}^*_1\left(\frac{\Omega_t}{f}, \mc{L}_t\right)^{\sigma_\mf{b}}\\
&=\frac{1}{f}\sum_{\mf{b}\in \mc{B}}\left(\frac{D_\alpha}{\mf{b}}\right)_3\mc{E}^*_1\left(\frac{\Omega_t}{f}, \mc{L}_t\right)^{\sigma_\mf{b}}.
\end{align*}
Let $\mc{C}$ be a set of elements of $\o$ prime to $M$ such that $c\mod M$ runs over $(\o/M\o)^\times$ precisely once as $c$ runs over $\mc{C}$, and such that $c\in \mc{C}$  implies $-c\in \mc{C}$. This is possible because $c\equiv -c\bmod M$ if and only if $c\equiv 0\bmod D$, which is absurd.
We may now set
$$\mc{B}=\{(3c+\epsilon M) : c\in \mc{C}\},$$
where the sign $\epsilon\in \{\pm 1\}$ of $M$ is chosen so that $\epsilon M\equiv 1\bmod 3$. In particular, we have $3c+\epsilon M \equiv 1\bmod 3\o$ for all $c\in \mc{C}$.
We define
$$V=\{c\in \mc{C}: \left(\frac{p}{\mf{b}}\right)_3=1 \text{ for all $p\mid D$, where }\mf{b}=(3c+\epsilon M)\in \mc{B}\}.$$

Now, for all $p\mid D$ and $\mf{b}=(3c+\epsilon M)\in \mc{B}$, we have
\begin{align*}\left(\frac{p}{\mf{b}}\right)_3&=\left( \frac{3c+\epsilon M}{p}\right)_3 \text{ (since both are congruent to $\pm 1 \bmod 3\o$)}\\
&=\left( \frac{3c}{p}\right)_3 \text{ (since $p\mid M$)}\\
&=\left( \frac{c}{p}\right)_3.
\end{align*}
Thus we see that if $c\in V$ then $-c\in V$, as $(-1)^3=-1$ is always cubic residue. We write $r$ and $s$ for the number of inert and split primes in the prime factorisation of $\Delta=\mathrm{rad}(D)$, respectively. In particular, $n=r+s$.
\begin{lem}\label{lemV} The Artin symbols of $\mf{b}=(3c+\epsilon M)$ in $\Gal(K(\mf{f})/K)$ give a set of representatives of the Galois group $\Gal\left(K(\mf{f})/K(\sqrt[3]{p_1},\ldots ,\sqrt[3]{p_n})\right)$ as $c$ runs over $V$. Therefore, \mbox{$3^{r+2s+1-n}\mid \#(V)$} if $2\mid t$, and $3^{r+2s-n}\mid \#(V)$ if $t=1$.
\end{lem}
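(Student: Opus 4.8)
The plan is to identify $V$, via the Artin map, with the subgroup $\Gal(K(\mf f)/L')$ of $\Gal(K(\mf f)/K)$, where $L'=K(\sqrt[3]{p_1},\dots,\sqrt[3]{p_n})$, and then to count $\#(V)$ using the index $[L':K]$ and the structure of $(\o/M\o)^\times$. The crucial preliminary is that $L'$ is contained in $K(\mf f)$. To see this, note that since $3\mid f$ the $3$-division field $K(C_{tD_\alpha}[3])$ is contained in $K(C_{tD_\alpha,\mf f})=K(\mf f)$, while the substitution $(x,y)\mto(\sqrt[3]{tD_\alpha}\,x,\ \sqrt[3]{tD_\alpha}\,y)$ is an isomorphism $C_1\to C_{tD_\alpha}$ over $K(\sqrt[3]{tD_\alpha})$ carrying the $3$-torsion point $(1,0)$ of $C_1$ to the point $(\sqrt[3]{tD_\alpha},0)$ of $C_{tD_\alpha}$; hence $\sqrt[3]{tD_\alpha}\in K(\mf f)$ for every $\alpha\in(\Z/3\Z)^n$. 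Taking $\alpha=0$ and $\alpha$ equal to the standard basis vectors and forming quotients gives $\sqrt[3]{p_i}\in K(\mf f)$ for all $i$, so $L'\sbe K(\mf f)$.

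With this in hand, set $G=\Gal(K(\mf f)/K)$ and $H=\Gal(K(\mf f)/L')$. For $c\in\mc C$ the ideal $\mf b=(3c+\epsilon M)$ is prime to $\mf f$, hence unramified in $K(\mf f)/K$, and by compatibility of the Artin map with restriction, $\s_\mf b\in H$ if and only if $\mf b$ splits completely in $L'/K$, equivalently in each $K(\sqrt[3]{p_i})/K$, equivalently $\left(\tfrac{p_i}{\mf b}\right)_3=1$ for all $i$. By the identity $\left(\tfrac{p}{\mf b}\right)_3=\left(\tfrac{c}{p}\right)_3$ established above, this is exactly the condition $c\in V$. Since $c\mto\s_\mf b$ is a bijection from $\mc C$ onto $G$ (the Artin symbols $\s_\mf b$ running over $G$ precisely once, by the construction of $\mc B$), it restricts to a bijection from $V$ onto $H$, which is the first assertion; in particular $\#(V)=[K(\mf f):L']=[K(\mf f):K]/[L':K]$.

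Finally I would evaluate the two terms. The distinct rational primes $p_1,\dots,p_n$ are independent modulo cubes in $K^\times$ — if $\prod_i p_i^{a_i}\in(K^\times)^3$ with $0\leqs a_i\leqs 2$, then applying $\N_{K/\Q}$ gives $\prod_i p_i^{2a_i}\in(\Q^\times)^3$, forcing $3\mid a_i$ for all $i$ — so $[L':K]=3^n$. On the other hand $[K(\mf f):K]=\#(\o/M\o)^\times$, which by the Chinese remainder theorem is $\prod_{p\mid M}\#(\o/p\o)^\times$; here $\#(\o/2\o)^\times=3$, while $\#(\o/p\o)^\times=p^2-1$ with $3\mid p+1$ for an inert prime $p$ and $\#(\o/p\o)^\times=(p-1)^2$ with $3\mid p-1$ for a split prime $p$. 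Hence $3^{r+2s}\mid\#(\o/M\o)^\times$ if $t=1$ and $3^{r+2s+1}\mid\#(\o/M\o)^\times$ if $2\mid t$; dividing by $3^n=[L':K]$ yields the asserted divisibilities for $\#(V)$. The only step that is not purely formal is the containment $L'\sbe K(\mf f)$ in the first paragraph; once it is available, everything after it is bookkeeping with the Artin map and a $3$-adic valuation count.
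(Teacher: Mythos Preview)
Your proof is correct and follows essentially the same strategy as the paper's: identify $V$ with $\Gal(K(\mf f)/K(\sqrt[3]{p_1},\dots,\sqrt[3]{p_n}))$ via the Artin map, then use the order of $(\o/M\o)^\times$ to extract the $3$-divisibility. You supply two details the paper leaves implicit---the containment $K(\sqrt[3]{p_1},\dots,\sqrt[3]{p_n})\subseteq K(\mf f)$ (your $3$-torsion argument is a pleasant alternative to the conductor bound from class field theory) and the equality $[L':K]=3^n$---but the architecture is the same.
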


\begin{proof}
By the definition of the cubic residue symbol, we see that $\sigma_{\mf{b}}$ acts trivially on $\sqrt[3]{D_\alpha}$ for all $\alpha\in (\Z/3\Z)^n$ if and only if $\left(\frac{p}{\mf{b}}\right)_3=1$ for all primes $p\mid D$, which happens if and only if $c\in V$ where $c$ is such that $\mf{b}=(3c+\epsilon M)$. Thus the first claim follows. Now, 
$$\#\left(\Gal(K(\mf{f})/K)\right)=\#\left(\left(\o/M \o)\right)^\times  \right)=3^e\prod_{p\mid D \text{ inert}}(p^2-1)\prod_{p'\mid D \text{ split}}(p'-1)^2,$$ where $e=1$ if $2\mid t$ and $e=0$ if $t=1$. It follows that $\ord_3\left(\#\left(\Gal(K(\mf{f})/K)\right)\right)\geq r+2s+e$. Thus 
$$\ord_3\left(\#\left(\Gal\left(K(\mf{f})/K(\sqrt[3]{p_1},\ldots , \sqrt[3]{p_n})\right)\right)\right)\geq r+2s+e-n,$$
as required.
\end{proof}

We consider the following sum of (usually imprimitive) Hecke $L$-functions:
$$\Phi_{tD}=\sum_{\alpha\in (\Z/3\Z)^n}\frac{L_S(\ovl{\psi}_{tD_\alpha},1)}{\Omega_t}.$$
The subscript $S$ will often be omitted from the notation whenever it is primitive.  We will now study the $3$-adic valuation of this sum.

\begin{thm}\label{thmPhi} We have
\begin{enumerate}
\item $\ord_3\left(\Phi_{tD}\right)\geq n-1$ if $t=1$ and $n=r$.
\item $\ord_3\left(\Phi_{tD}\right)\geq n-\frac{1}{3}$ otherwise.
\end{enumerate}
\end{thm}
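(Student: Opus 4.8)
The plan is to substitute the Goldstein--Schappacher formula
$$\frac{L_S(\ovl{\psi}_{tD_\alpha},1)}{\Omega_t}=\frac{1}{f}\sum_{\mf{b}\in \mc{B}}\left(\frac{D_\alpha}{\mf{b}}\right)_3\mc{E}^*_1\left(\frac{\Omega_t}{f}, \mc{L}_t\right)^{\sigma_\mf{b}}$$
into the definition of $\Phi_{tD}$, interchange the two summations, and exploit orthogonality of cubic characters. Writing $\mf{b}(c)=(3c+\epsilon M)$ for $c\in \mc{C}$ and using $\left(\frac{D_\alpha}{\mf{b}(c)}\right)_3=\prod_{i=1}^{n}\left(\frac{p_i}{\mf{b}(c)}\right)_3^{\alpha_i}$, the inner sum over $\alpha\in(\Z/3\Z)^n$ factors as $\prod_{i=1}^n\bigl(1+\left(\frac{p_i}{\mf{b}(c)}\right)_3+\left(\frac{p_i}{\mf{b}(c)}\right)_3^2\bigr)$, which equals $3^n$ when $c\in V$ and $0$ otherwise. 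Since $f=3M$ with $M=\mathrm{rad}(tD)$ prime to $3$, this gives
$$\Phi_{tD}=\frac{3^n}{f}\sum_{c\in V}\mc{E}^*_1\left(\frac{\Omega_t}{f}, \mc{L}_t\right)^{\sigma_{\mf{b}(c)}}=\frac{3^{n-1}}{M}\,\Sigma,\qquad \Sigma:=\sum_{c\in V}\mc{E}^*_1\left(\frac{\Omega_t}{f}, \mc{L}_t\right)^{\sigma_{\mf{b}(c)}}.$$

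Next I would read off the weaker bound. The prefactor $3^{n-1}/M$ already contributes $n-1$ to $\ord_3$, so it suffices to know that $\Sigma$ is $\mf{p}$-integral, where $\mf{p}=(1-\omega)$ is the prime of $K$ above $3$ (working in the ray class field $K(\mf f)$, where the Eisenstein values and their conjugates live). This yields $\ord_3(\Phi_{tD})\geq n-1$, which is exactly the assertion of case (1) and is the fallback estimate in case (2).

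For the improvement to $n-\tfrac13$ in case (2), I would combine Lemma \ref{lemV} with a congruence for the Eisenstein values. By Lemma \ref{lemV} the Artin symbols $\sigma_{\mf{b}(c)}$, $c\in V$, run exactly once over $\Gal\bigl(K(\mf{f})/K(\sqrt[3]{p_1},\ldots,\sqrt[3]{p_n})\bigr)$, so $\Sigma$ has the shape of a partial trace of $\mc{E}^*_1\bigl(\tfrac{\Omega_t}{f},\mc{L}_t\bigr)$ from $K(\mf{f})$ down to $K(\sqrt[3]{p_1},\ldots,\sqrt[3]{p_n})$; moreover $3^{\,r+2s+e-n}\mid\#(V)$ with $e=1$ precisely when $2\mid t$, and $r+2s+e-n=s+e\geq 1$ in every case covered by (2), so $3\mid\#(V)$. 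The technical heart is then a congruence, in the spirit of Zhao's computation \cite{zhao} and its cubic adaptation in \cite{yuka}: one shows that the conjugates $\mc{E}^*_1\bigl(\tfrac{\Omega_t}{f},\mc{L}_t\bigr)^{\sigma_{\mf{b}(c)}}$ are congruent to one fixed value modulo a suitable power of $(1-\omega)$, so that $\Sigma\equiv \#(V)\cdot(\text{fixed value})$ modulo that power; since $3\mid\#(V)$ this produces an extra factor of $3$ in $\ord_3(\Sigma)$ beyond mere $\mf{p}$-integrality, up to a bounded denominator coming from the period $\Omega_t=\Omega_1/\sqrt[3]{t}$ and from the ramification of $(1-\omega)$ in $K(\mf f)$ and in $K(\sqrt[3]{p_1},\ldots,\sqrt[3]{p_n})$. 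Bookkeeping these contributions gives $\ord_3(\Sigma)\geq \tfrac23$ and hence $\ord_3(\Phi_{tD})\geq (n-1)+\tfrac23=n-\tfrac13$, whereas in case (1) no $3$-divisibility of $\#(V)$ is available and only $n-1$ survives.

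The step I expect to be the main obstacle is this last one: controlling the $(1-\omega)$-adic size and reduction of the Kronecker--Eisenstein value $\mc{E}^*_1\bigl(\tfrac{\Omega_t}{f},\mc{L}_t\bigr)$ uniformly across its Galois conjugates. This requires re-expressing $\mc{E}^*_1$ at the $f$-division point $\tfrac{\Omega_t}{f}$ in terms of quantities with transparent integrality --- values of Weierstrass functions at division points, or the distribution and trace relations for Eisenstein series --- and then pinning down exactly the denominator introduced by $\Omega_t=\Omega_1/\sqrt[3]{t}$ and by the ramification of $3$ in the ray class field, which is precisely what produces the fractional correction $\tfrac13$.
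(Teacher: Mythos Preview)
Your reduction to $\Phi_{tD}=\tfrac{3^{n-1}}{M}\,\Sigma$ via orthogonality matches the paper exactly, and you correctly anticipate that Lemma~\ref{lemV} and Weierstrass-function values must enter. The gap is in your proposed mechanism for bounding $\ord_3(\Sigma)$: the claim that the Galois conjugates $\mc{E}^*_1\bigl(\tfrac{\Omega_t}{f},\mc L_t\bigr)^{\sigma_{\mf b(c)}}$ are all congruent to one fixed value modulo a suitable power of $(1-\omega)$ is not something one can expect to hold as stated. After unwinding the Galois action and writing $\mc{E}^*_1(z,\mc L_t)=\zeta(z,\mc L_t)-\tfrac{2\pi\bar z}{\sqrt3\,\Omega_t^2}$, the individual conjugates contain odd-in-$c$ pieces such as $\zeta\bigl(\tfrac{c\Omega_t}{M},\mc L_t\bigr)$; these vary genuinely with $c$, and there is no evident congruence among them. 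The same issue undermines your assertion that $\Sigma$ is $\mf p$-integral in case~(1): this is not automatic from general integrality of Eisenstein values.

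What the paper actually does is exploit the symmetry $c\leftrightarrow -c$ inside $V$, an ingredient you do not mention. Using the addition formula for $\zeta$ at $z_1=\tfrac{\epsilon\Omega_t}{3}$, $z_2=\tfrac{c\Omega_t}{M}$, together with the parity of $\zeta,\wp',\wp$ and the equality $\bigl(\tfrac{t}{\mf b(c)}\bigr)_3=\bigl(\tfrac{t}{\mf b(-c)}\bigr)_3$, all odd-in-$c$ contributions cancel upon summing over $V$. After evaluating $\zeta,\wp,\wp'$ at $\tfrac{\Omega_t}{3}$ explicitly, the sum collapses to $\sqrt[3]{t}\sum_{c\in V}\bigl(\tfrac{t}{\mf b}\bigr)_3\bigl(\tfrac12\cdot\tfrac{3\epsilon}{1-\wp(c\Omega/M,\mc L)}-\epsilon\bigr)$, with $(\Omega,\mc L)$ attached to $y^2=4x^3-1$. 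The input $\ord_3\bigl(1-\wp(c\Omega/M,\mc L)\bigr)=\tfrac13$ then gives each non-constant term $\ord_3=\tfrac23$ directly --- no congruence trick is needed for these. The leftover constant $-\epsilon\sum_{c\in V}\bigl(\tfrac{t}{\mf b}\bigr)_3$ either vanishes (when $2\mid t$, because the cubic character of $t$ is nontrivial on $V$) or equals $-\epsilon\,\#(V)$ (when $t=1$), and it is only this last piece that invokes $3\mid\#(V)$ from Lemma~\ref{lemV}. So the role of $\#(V)$ is narrower than in your sketch: it handles a residual constant, not the whole of $\Sigma$.
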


\begin{proof} 
Note that 
$\varphi^\mf{b}: \alpha\mto \left(\frac{D_\alpha}{\mf{b}}\right)_3$ gives a $1$-dimentional character of $(\Z/3\Z)^n$. Any $1$-dimentional character is irreducible, and $(\Z/3\Z)^n$ is an abelian group. Thus considering the inner product of $\varphi^\mf{b}$ and the trivial character $id$, we obtain
\begin{align*}\langle \varphi^\mf{b}, id\rangle &=\frac{1}{|(\Z/3\Z)^n)|}\sum_{\alpha\in (\Z/3\Z)^n}\varphi^\mf{b}(\alpha)\ovl{id(\alpha)}=\frac{1}{3^n}\sum_{\alpha\in (\Z/3\Z)^n}\left(\frac{D_\alpha}{\mf{b}}\right)_3\\
&=\left\{ \begin{array}{ll}
 1 &\mbox{if $\varphi^\mf{b}(\alpha)=1$ for all $\alpha\in (\Z/3\Z)^n$} \\
 0 &\mbox{otherwise.}
       \end{array} \right.
\end{align*}
 It follows that
$$\sum_{\alpha \in (\Z/3\Z)^n}\varphi^\mf{b}(\alpha)=\left\{ \begin{array}{ll}
 3^n &\mbox{if $c\in V$} \\
 0 &\mbox{otherwise.}
       \end{array} \right.$$
Hence, we have
\begin{align*}
\Phi_{tD}&=\frac{1}{f}\sum_{\mf{b}\in \mc{B}}\left(\sum_{\alpha\in (\Z/3\Z)^n} \left(\frac{D_\alpha}{\mf{b}}\right)_3 \right)\mc{E}^*_1\left(\frac{\Omega_t}{f},\mc{L}_t\right)^{\sigma_\mf{b}}\\
&=\frac{3^n}{f}\sum_{c\in V}\mc{E}^*_1\left(\frac{\Omega_t}{f},\mc{L}_t\right)^{\sigma_\mf{b}}.
\end{align*}
Given an integral ideal $\mf{a}$ of $K$, we have
$$\psi_{t}(\mf{a})=\ovl{\left(\frac{t}{\mf{a}}\right)}_3 a,$$
where $a$ is a generator of $\mf{a}$ satisfying $a\equiv 1\bmod 3$ (see, for example, \cite[Chapter II, Example 10.6]{silverman}). 
Thus using the fact that $\mc{E}^*_1\left(\frac{\Omega_t}{f},\mc{L}_t\right)^{\sigma_\mf{b}}=\mc{E}^*_1\left(\frac{\psi_t(\mf{b})\Omega_t}{f},\mc{L}_t\right)$ (see \cite{gol-sch}), we obtain
\begin{align*}
\Phi_{tD}=\frac{3^n}{f}\sum_{c\in V}\left(\frac{t}{\mf{b}}\right)_3\mc{E}^*_1\left(\frac{\epsilon \Omega_t}{3}+\frac{c\Omega_t}{M},\mc{L}_t\right).
\end{align*}
Since $\ord_3(f)=1$, it suffices to show  
$$
\ord_3\left(\sum_{c\in V}\left(\frac{t}{\mf{b}}\right)_3\mc{E}^*_1\left(\frac{\epsilon \Omega_t}{3}+\frac{c\Omega_t}{M},\mc{L}_t\right)\right)\geq\left\{ \begin{array}{ll}
0 &\mbox{if $t=1$ and $n=r=1$} \\
\frac{2}{3} &\mbox{otherwise.}
       \end{array} \right.$$
Define $s_2(\mathcal{L}_t)=\lim_{\substack{s\to 0 \\ s>0}}\sum_{w\in \mathcal{L}_t\backslash \{0\}}w^{-2}|w |^{-2s}$ and  $A(\mathcal{L}_t)=\frac{\ovl{u}v-u\ovl{v}}{2\pi i}$ where $(u,v)$ is a basis of $\mathcal{L}_t$ over $\mathbb{Z}$ satisfying $\mathrm{Im}(v/u)>0$. We write $\zeta(z,\mathcal{L}_t)$ for the Weierstrass zeta function of $\mathcal{L}_t$. Then by \cite[Proposition 1.5]{gol-sch}, we have
$$\mathcal{E}_1^*(z,\mathcal{L}_t)=\zeta(z,\mathcal{L}_t)-zs_2(\mathcal{L}_t)-\overline{z}A(\mathcal{L}_t)^{-1}.$$
Thus writing $\omega=\frac{-1+\sqrt{-3}}{2}$, we have $A(\mathcal{L}_t)=\frac{\Omega_t^2(\omega-\overline{\omega})}{2\pi i}=\frac{\sqrt{3}\Omega_t^2}{2\pi}$, and we can see that $s_2(\mathcal{L}_t)=0$ on noting that $\omega$ stabilises $\mathcal{L}_t$, which gives $\omega^{-2} s_2(\mathcal{L}_t)=s_2(\mathcal{L}_t)$. Hence
$$\mathcal{E}_1^*(z, \mathcal{L}_t)=\zeta(z,\mathcal{L}_t)-\frac{2\pi\overline{z}}{\sqrt{3}\Omega_t^2}.$$
Recall also that for $z_1, z_2\in \mathbb{C}$, we have an addition formula:
$$\zeta(z_1+z_2, \mathcal{L}_t)=\zeta(z_1, \mathcal{L}_t)+\zeta(z_2, \mathcal{L}_t)+\frac{1}{2}\frac{\wp'(z_1, \mathcal{L}_t)-\wp'(z_2, \mathcal{L}_t)}{\wp(z_1, \mathcal{L}_t)-\wp(z_2, \mathcal{L}_t)}.$$
Applying this with $z_1=\frac{\epsilon \Omega_t}{3}$, $z_2=\frac{c\Omega_t}{M}$, we get
\begin{SMALL}
\begin{align*}&\sum\limits_{c\in V}\left(\frac{t}{\mf{b}}\right)_3\mathcal{E}_1^*\left(\frac{c\Omega_t}{M}+\frac{\epsilon \Omega_t}{3}, \mathcal{L}_t\right)=\sum\limits_{c\in V}\left(\frac{t}{\mf{b}}\right)_3\left(\zeta\left(\frac{c\Omega_t}{M}+\frac{\epsilon \Omega_t}{3}, \mathcal{L}_t\right)-\left(\frac{\overline{c}\Omega_t}{M}+\frac{\epsilon \Omega_t}{3}\right)\frac{2\pi}{\sqrt{3}\Omega_t^2}\right)\\
&=\sum\limits_{c\in V}\left(\frac{t}{\mf{b}}\right)_3\left(\epsilon  \zeta\left(\frac{\Omega_t}{3}, \mathcal{L}_t\right)+\zeta\left(\frac{c\Omega_t}{M}, \mathcal{L}_t\right)+\frac{1}{2}\frac{\epsilon  \wp'(\frac{\Omega_t}{3}, \mathcal{L}_t)-\wp'(\frac{c\Omega_t}{M}, \mathcal{L}_t)}{\wp(\frac{\Omega_t}{3}, \mathcal{L}_t)-\wp(\frac{c\Omega_t}{M}, \mathcal{L}_t)}-\left(\frac{\overline{c}\Omega_t}{M}+\frac{\epsilon \Omega_t}{3}\right)\frac{2\pi}{\sqrt{3}\Omega_t^2}\right)\\
&=\sum\limits_{c\in V}\left(\frac{t}{\mf{b}}\right)_3\left(\frac{1}{2}\frac{\epsilon  \wp'(\frac{\Omega_t}{3}, \mathcal{L}_t)}{\wp(\frac{\Omega_t}{3}, \mathcal{L}_t)-\wp(\frac{c\Omega_t}{M}, \mathcal{L}_t)}+ \epsilon \zeta\left(\frac{\Omega_t}{3}, \mathcal{L}_t\right)-\epsilon \frac{2\pi}{3\sqrt{3}\Omega_t}\right).
\end{align*}
\end{SMALL}
Here, we used the key property that $c\in V$ then $-c\in V$, $\left(\frac{2}{\mf{b}}\right)_3=\left(\frac{c}{2}\right)_3=\left(\frac{-c}{2}\right)_3$ and that $\zeta$ and $\wp'$ are odd functions, whereas $\wp$ is an even function. Recall that $\Omega_1$ and $\mathcal{L}_1$ correspond to the equation $y^2=4x^3-3^3$. Thus by computation, we get $\wp(\frac{\Omega_1}{3}, \mathcal{L}_1)=3$ and $\wp'(\frac{\Omega_1}{3}, \mathcal{L}_1)=9$. Now, we can write $\Omega_t=\frac{\Omega_1}{\sqrt[3]{t}}$, so that  $\wp(\frac{\Omega_t}{3}, \mathcal{L}_t)=(\sqrt[3]{t})^{2} \wp(\frac{\Omega_1}{3}, \mathcal{L}_1)=3\sqrt[3]{t^2}$. Furthermore, we have $(x,y)=(\wp(\frac{\Omega_t}{3}, \mathcal{L}_t), \wp'(\frac{\Omega_t}{3}, \mathcal{L}_t))$ satisfying the equation for $y^2=4x^3-t^23^3$. It follows that for $t=2$, we have $\wp'(\frac{\Omega_2}{3}, \mathcal{L}_2)=18$. Similarly, for $t=4$, we have $\wp'(\frac{\Omega_4}{3}, \mathcal{L}_4)=36$.

 By applying formulae (3.2) and (3.3) of \cite[p. 126]{stephens}, we obtain
\begin{equation}\label{eq2.2.4}\zeta(z+1,\o)=\zeta(z,\o)+\frac{2\pi}{\sqrt{3}}, \;\;\;\zeta(z+\omega,\o)=\zeta(z,\o)+\frac{2\pi}{\sqrt{3}}\overline{\omega}.
\end{equation}
Letting $z=-\frac{1}{3}$ in \eqref{eq2.2.4} gives
$$\zeta\left(\frac{2}{3}, \o\right)+\zeta\left(\frac{1}{3}, \o\right)=\frac{2\pi}{\sqrt{3}}.$$
But we have $\zeta\left(\Omega_t z, \mathcal{L}_t\right)=\frac{1}{\Omega_t}\zeta\left(z, \o\right)$, so
\begin{equation}\label{eq1}\zeta\left(\frac{2\O}{3}, \mathcal{L}_t\right)+\zeta\left(\frac{\Omega_t}{3}, \mathcal{L}_t\right)=\frac{2\pi}{\sqrt{3}\Omega_t}.
\end{equation}
On the other hand, we have
$$\zeta(2z, \mathcal{L}_t)=2\zeta(z,\mathcal{L}_t)+\frac{\wp''(z, \mathcal{L}_t)}{2\wp'(z,\mathcal{L}_t)},$$
and by differentiating the equation $\left(\wp'(z,\mathcal{L}_t)\right)^2=4\wp^3(z,\mathcal{L}_t)-t^2 3^3$, we get $\wp''(z,\mathcal{L}_t)=6\wp^2(z,\mathcal{L}_t)$. Thus plugging in the values of $\wp\left(\frac{\Omega_t}{3},\mathcal{L}_t\right)$ and $\wp'\left(\frac{\Omega_t}{3},\mathcal{L}_t\right)$, we obtain
\begin{equation}\label{eq2}\zeta\left(\frac{2\O}{3}, \mathcal{L}_t\right)-2\zeta\left(\frac{\Omega_t}{3}, \mathcal{L}_t\right)=\frac{\wp''\left(\frac{\Omega_t}{3}, \mathcal{L}_t\right)}{2\wp'\left(\frac{\Omega_t}{3},\mathcal{L}_t\right)}=\frac{6\wp^2\left(\frac{\Omega_t}{3}, \mathcal{L}_t\right)}{2\wp'\left(\frac{\Omega_t}{3},\mathcal{L}_t\right)}=3\sqrt[3]{t}.
\end{equation}
Now, solving \eqref{eq1} and \eqref{eq2} gives
$$\zeta\left(\frac{\Omega_t}{3}, \mathcal{L}_t\right)=\frac{2\pi}{3\sqrt{3}\Omega_t}-\sqrt[3]{t}.$$
Hence,
\begin{align}\label{sumE}\nonumber \sum\limits_{c\in V}\left(\frac{t}{\mf{b}}\right)_3\mathcal{E}_1^*\left(\frac{c\Omega_t}{M}+\frac{\Omega_t}{3}, \mathcal{L}_t\right)&=\sum\limits_{c\in V}\left(\frac{t}{\mf{b}}\right)_3\left(\frac{1}{2}\frac{\epsilon \wp'\left(\frac{\Omega_t}{3},\mathcal{L}_t\right)}{\wp\left(\frac{\Omega_t}{3},\mathcal{L}_t\right)-\wp\left(\frac{c\Omega_t}{M},\mathcal{L}_t\right)}-\epsilon \sqrt[3]{t}\right)\\
&=\sqrt[3]{t}\left(\sum\limits_{c\in V}\frac{1}{2}\left(\frac{t}{\mf{b}}\right)_3\left(\frac{9\epsilon }{3-\wp\left(\frac{c\Omega_1}{M},\mathcal{L}_1\right)}\right)-\epsilon \left(\frac{t}{\mf{b}}\right)_3\right)\\
&=\left\{ \begin{array}{ll}
\frac{1}{2}\sum\limits_{c\in V}\frac{3\epsilon}{1-\wp\left(\frac{c\Omega}{M},\mathcal{L}\right)}-\epsilon \#(V)&\mbox{if $t=1$} \\
\nonumber \sqrt[3]{t}\left(\frac{1}{2}\sum\limits_{c\in V}\left(\frac{t}{\mf{b}}\right)_3\frac{3\epsilon}{1-\wp\left(\frac{c\Omega}{M},\mathcal{L}\right)}\right) &\mbox{otherwise,}
       \end{array} \right.
\end{align}
where $\Omega$ and $\mathcal{L}$ correspond to the equation $y^2=4x^3-1$, so that $\Omega_1=\frac{\Omega}{\sqrt{3}}$. Here, we also used the fact that $\sum_{c\in V}\left(\frac{t}{\mf{b}}\right)_3=0$ if $2\mid t$, since
$$\#\left(\Gal(K(\mf{f})/K(\sqrt[3]{2},\sqrt[3]{p_1}, \ldots ,\sqrt[3]{p_n}))\right)=\frac{\#(V)}{3},$$
and thus there exists $c\in V$ with $\left(\frac{t}{\mf{b}}\right)_3=\left(\frac{c}{t}\right)_3\neq 1$.
But we know from \cite[Lemma 2.5]{kl} or \cite[Lemma 3]{szhang} that 
$$\ord_3\left(1-\wp\left(\frac{c\Omega}{M},\mathcal{L}\right)\right)=1/3,$$
since $(M,3)=1$. The claim now follows on noting that, by Lemma \ref{lemV}, we always have $\ord_3(\#(V))\geq 1$ except when $t=1$ and $n=r$.
\end{proof}

Given any character $\chi: (\Z/3\Z)^n\to \C^\times$, we define
\begin{equation}\label{PhiChi}\Phi_{tD}^{(\chi)}=\sum_{\alpha\in (\Z/3\Z)^n}\chi(\alpha)\frac{L_S(\overline{\psi}_{tD_\alpha},1)}{\Omega_t}.
\end{equation}

\begin{cor}\label{Chi} For any character $\chi: (\Z/3\Z)^n\to \C^\times$, we have
\begin{enumerate}
\item $\ord_3\left(\Phi^{(\chi)}_{tD}\right)\geq n-1$ if $t=1$ and $n=r$.
\item $\ord_3\left(\Phi^{(\chi)}_{tD}\right)\geq n-\frac{1}{3}$ otherwise.

\end{enumerate}
\end{cor}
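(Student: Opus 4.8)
The plan is to run the proof of Theorem \ref{thmPhi} essentially verbatim, replacing the trivial character by $\chi$. The character-orthogonality step that produced the index set $V$ will instead single out the conjugate character $\ovl{\chi}$, so the only genuinely new point is to verify that the resulting index set behaves exactly as $V$ did.

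Concretely, I would start from the Goldstein--Schappacher identity $\frac{L_S(\ovl{\psi}_{tD_\alpha},1)}{\Omega_t}=\frac{1}{f}\sum_{\mf{b}\in\mc{B}}\big(\frac{D_\alpha}{\mf{b}}\big)_3\mc{E}_1^{*}\big(\frac{\Omega_t}{f},\mc{L}_t\big)^{\sigma_\mf{b}}$ used in the proof of Theorem \ref{thmPhi}, multiply by $\chi(\alpha)$, sum over $\alpha\in(\Z/3\Z)^n$, and interchange the two sums to obtain
$$\Phi_{tD}^{(\chi)}=\frac{1}{f}\sum_{\mf{b}\in\mc{B}}\Big(\sum_{\alpha\in(\Z/3\Z)^n}(\chi\varphi^{\mf{b}})(\alpha)\Big)\mc{E}_1^{*}\Big(\frac{\Omega_t}{f},\mc{L}_t\Big)^{\sigma_\mf{b}},$$
where $\varphi^{\mf{b}}(\alpha)=\big(\frac{D_\alpha}{\mf{b}}\big)_3$ is the character of $(\Z/3\Z)^n$ appearing in the proof of Theorem \ref{thmPhi}. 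Since $\chi\varphi^{\mf{b}}$ is again a character of $(\Z/3\Z)^n$, orthogonality shows that the inner sum equals $3^n$ when $\varphi^{\mf{b}}=\ovl{\chi}$ and $0$ otherwise. Writing $\mf{b}=(3c+\epsilon M)$ and $V_\chi=\{c\in\mc{C}:\varphi^{(3c+\epsilon M)}=\ovl{\chi}\}$, this yields $\Phi_{tD}^{(\chi)}=\frac{3^n}{f}\sum_{c\in V_\chi}\mc{E}_1^{*}\big(\frac{\Omega_t}{f},\mc{L}_t\big)^{\sigma_\mf{b}}$, the exact analogue of the expression for $\Phi_{tD}$.

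The key step is then to check that $V_\chi$ inherits the three features of $V$ used in the proof of Theorem \ref{thmPhi}. Since $\big(\frac{p_i}{\mf{b}}\big)_3$ records the action of $\sigma_\mf{b}$ on $\sqrt[3]{p_i}$ and $K(\sqrt[3]{p_1},\ldots,\sqrt[3]{p_n})/K$ has degree $3^n$ inside $K(\mf{f})$, the condition $\varphi^{\mf{b}}=\ovl{\chi}$ prescribes the image of $\sigma_\mf{b}$ in $\Gal(K(\sqrt[3]{p_1},\ldots,\sqrt[3]{p_n})/K)$; hence $V_\chi$ corresponds under $\mc{C}\simeq\Gal(K(\mf{f})/K)$ to a (non-empty) coset of $\Gal(K(\mf{f})/K(\sqrt[3]{p_1},\ldots,\sqrt[3]{p_n}))$, so $\#(V_\chi)=\#(V)$ and $\ord_3(\#(V_\chi))\geq 1$ unless $t=1$ and $n=r$, exactly as in Lemma \ref{lemV}. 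Next, the congruence computation already carried out in the excerpt gives $\big(\frac{p}{(3c+\epsilon M)}\big)_3=\big(\frac{p}{(-3c+\epsilon M)}\big)_3$ and $\big(\frac{t}{(3c+\epsilon M)}\big)_3=\big(\frac{t}{(-3c+\epsilon M)}\big)_3$, since $-1$ is a cube and $\big(\frac{3}{p}\big)_3=1$; thus $c\in V_\chi\iff -c\in V_\chi$ and the weight $\big(\frac{t}{\mf{b}}\big)_3$ is unchanged under $c\mapsto -c$. Finally, when $2\mid t$ one has $\sqrt[3]{t}\notin K(\sqrt[3]{p_1},\ldots,\sqrt[3]{p_n})$, so $\sigma\mapsto\sigma(\sqrt[3]{t})/\sqrt[3]{t}$ is a surjective homomorphism from $\Gal(K(\mf{f})/K(\sqrt[3]{p_1},\ldots,\sqrt[3]{p_n}))$ onto the group of cube roots of unity; it therefore takes each value equally often on the coset $V_\chi$, giving $\sum_{c\in V_\chi}\big(\frac{t}{\mf{b}}\big)_3=0$, the analogue of the vanishing used in \eqref{sumE}.

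With these facts in hand I would conclude by transcribing the remaining computation of the proof of Theorem \ref{thmPhi} word for word with $V$ replaced by $V_\chi$: the addition formula for $\zeta$, the parity of $\zeta$, $\wp'$, $\wp$, and the explicit values $\wp(\tfrac{\Omega_t}{3},\mc{L}_t)=3\sqrt[3]{t^2}$, $\wp'(\tfrac{\Omega_t}{3},\mc{L}_t)$ and $\zeta(\tfrac{\Omega_t}{3},\mc{L}_t)=\tfrac{2\pi}{3\sqrt{3}\Omega_t}-\sqrt[3]{t}$ do not involve $\chi$, so the same manipulation reduces matters to bounding $\ord_3\big(\sum_{c\in V_\chi}(\tfrac{t}{\mf{b}})_3\,\mc{E}_1^{*}(\tfrac{c\Omega_t}{M}+\tfrac{\Omega_t}{3},\mc{L}_t)\big)$; using $\ord_3(1-\wp(\tfrac{c\Omega}{M},\mc{L}))=\tfrac13$ together with $\ord_3(\#(V_\chi))\geq1$ (when $t\neq1$ or $n\neq r$) gives a lower bound of $\tfrac23$ in general and of $0$ in the exceptional case $t=1,\,n=r$. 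Multiplying by $\tfrac{3^n}{f}$ and recalling $\ord_3(f)=1$ yields the two asserted bounds. The main — and essentially the only — obstacle is the middle paragraph, i.e.\ confirming that $V_\chi$ is a coset of $\Gal(K(\mf{f})/K(\sqrt[3]{p_1},\ldots,\sqrt[3]{p_n}))$ stable under $c\mapsto -c$ with $\big(\frac{t}{\mf{b}}\big)_3$-weights invariant; once this is established the rest is a mechanical repetition of the previous proof.
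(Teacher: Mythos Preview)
Your proof is correct and follows essentially the same approach as the paper: the paper defines the same set $V^{(\chi)}$, observes it is either empty or a translate $c'V$ of $V$ (hence $\#(V^{(\chi)})=\#(V)$ or $0$), and then declares the rest a straightforward adaptation of the proof of Theorem~\ref{thmPhi}. You fill in more of the details the paper leaves implicit (closure of $V_\chi$ under $c\mapsto -c$, the vanishing of $\sum_{c\in V_\chi}\big(\tfrac{t}{\mf{b}}\big)_3$ on the coset when $2\mid t$), and your assertion that $V_\chi$ is actually non-empty is true, though the paper's argument works without it.
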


\begin{proof}For any $i\in\{1,\ldots , n\}$, let $\delta_i=(0,\ldots ,0, 1,0,\ldots ,0)\in (\Z/3\Z)^n$ whose $i$th coordinate is equal to $1$ . Note that 
$\varphi^{(\chi,\mf{b})}: \alpha\mto \chi(\alpha)\left(\frac{D_\alpha}{\mf{b}}\right)_3$ gives a $1$-dimentional character of $(\Z/3\Z)^n$. Thus considering the inner product of $\varphi^{(\chi,\mf{b})}$ and the trivial character gives 
$$\sum_{\alpha \in (\Z/3\Z)^n}\varphi^{(\chi,\mf{b})}(\alpha)=\left\{ \begin{array}{ll}
 3^n &\mbox{if $c\in V^{(\chi)}$} \\
 0 &\mbox{otherwise,}
       \end{array} \right.$$
where $V^{(\chi)}=\{c\in \mc{C}: \left(\frac{p_i}{\mf{b}}\right)=\chi(\delta_i)^2 \text{ for all }i\}$. Suppose there is an element $c'\in \mc{C}$ such that $\left(\frac{p_i}{c'}\right)_3=\chi(\delta_i)^2$ for all $i$. Then $V^{(\chi)}=c'V$, so that in particular $\#(V^{(\chi)})=\#(V)$. If no such $c'$ exists, then $\#(V^{(\chi)})=0$. In either case, we have $\ord_3(\#(V^{(\chi)}))\geq \ord_3(\#(V))$, and the rest of the proof is a straightforward adaptation of the proof of Theorem \ref{thmPhi}. 
\end{proof}

We are now ready to prove our first main result. Recall that given an integer $m$, $k(m)$ denotes the number of distinct prime factors dividing $m$ in $\Z$. 
\begin{thm}\label{main1}Let $D> 1$ be any integer with $k(D)=n$ and $(D,6)=1$. Let $\e_{tD}=1$ if $t=1$ and $D$ has a prime factor which is inert in $K$, and let $\e_{tD}=0$ otherwise. Then
 $$\ord_3\left(L^{(\mathrm{alg})}(C_{tD}, 1)\right)\geq n-\e_{tD}.$$
\end{thm}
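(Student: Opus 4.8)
The plan is to reduce the statement to a $3$-adic estimate for a single twisted Eisenstein sum, and then establish that estimate by an induction on $n$ that imitates the proof of Theorem \ref{thmPhi} but carries the cubic residue symbol attached to $D$ itself along, rather than collapsing it into the indicator function of $V$.

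\emph{Reduction to the imprimitive value.} Since $(tD,3)=1$, the curve $C_{tD}$ has additive reduction at $3$, at every prime dividing $D$, and, when $t$ is even, at $2$; hence $\psi_{tD}$ is ramified at every prime of $S$, so all Euler factors removed in passing from $L(C_{tD},s)=L(\overline{\psi}_{tD},s)$ to $L_S(\overline{\psi}_{tD},s)$ are trivial and $L_S(\overline{\psi}_{tD},1)=L(C_{tD},1)$. Combined with $\Omega_{tD}=\Omega_t/\sqrt[3]{D}$ and $\ord_3(\sqrt[3]{D})=0$ this gives
$$\ord_3\bigl(L^{(\mathrm{alg})}(C_{tD},1)\bigr)=\ord_3\left(\frac{L_S(\overline{\psi}_{tD},1)}{\Omega_t}\right),$$
so it suffices to bound the $3$-adic valuation of the single term of $\Phi_{tD}$ indexed by $\alpha=(\ord_{p_1}D,\dots,\ord_{p_n}D)$.

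\emph{Making that term explicit.} Running the computation in the proof of Theorem \ref{thmPhi} for this one term — so the sum $\sum_\alpha\left(\frac{D_\alpha}{\mathfrak{b}}\right)_3$ is not simplified, but instead the factor $\left(\frac{D}{\mathfrak{b}}\right)_3$ is kept, and one uses the same $c\mapsto-c$ symmetry of $\mathcal{C}$ together with the computed values $\wp\bigl(\tfrac{\Omega_t}{3},\mathcal{L}_t\bigr),\ \wp'\bigl(\tfrac{\Omega_t}{3},\mathcal{L}_t\bigr),\ \zeta\bigl(\tfrac{\Omega_t}{3},\mathcal{L}_t\bigr)$ — one arrives, up to a factor of valuation zero, at
$$\ord_3\bigl(L^{(\mathrm{alg})}(C_{tD},1)\bigr)=\ord_3\left(\,\sum_{c\in\mathcal{C}}\left(\frac{tD}{\mathfrak{b}}\right)_3\frac{1}{1-\wp\bigl(\tfrac{c\Omega}{M},\mathcal{L}\bigr)}\right),$$
where $M=\mathrm{rad}(tD)$, $\mathcal{C}$ is a set of representatives of $(\o/M\o)^\times$ closed under $c\mapsto-c$, $\mathfrak{b}=(3c+\epsilon M)$, and $\Omega,\mathcal{L}$ belong to $y^2=4x^3-1$. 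Here the $\zeta$-type constant terms drop out because $\mathfrak{b}\mapsto\left(\frac{tD}{\mathfrak{b}}\right)_3$ is a \emph{nontrivial} cubic character of $(\o/M\o)^\times$ (as $tD$ is cube-free and $>1$, it is not a cube in $K^\times$), just as $\sum_{c\in V}\left(\frac{t}{\mathfrak{b}}\right)_3=0$ was used in Theorem \ref{thmPhi}. By \cite[Lemma 2.5]{kl} each summand has valuation $-\tfrac13$, so the point is to exhibit cancellation of size $3^{\,n-\epsilon_{tD}+1/3}$ in this character sum.

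\emph{The induction, and the main obstacle.} I would induct on $n$, peeling off one prime $q$ of $D$ at a time: writing $M=qM'$ and splitting $(\o/M\o)^\times\cong(\o/q\o)^\times\times(\o/M'\o)^\times$ by the Chinese remainder theorem, the character $\left(\frac{tD}{\mathfrak{b}}\right)_3$ factors as a nontrivial cubic character of $(\o/q\o)^\times$ times a character of $(\o/M'\o)^\times$, while the addition formula on $y^2=4x^3-1$ rewrites $\wp\bigl(\tfrac{c\Omega}{M},\mathcal{L}\bigr)$ as an explicit rational function of the $q$-division values and the $M'$-division values. Summing over the $(\o/q\o)^\times$-variable first, and using that $\ord_3\bigl(\#(\o/q\o)^\times\bigr)\geq1$ (namely $2\ord_3(q-1)\geq2$ when $q$ splits and $\ord_3(q+1)\geq1$ when $q$ is inert, since then $q\equiv2\bmod3$), together with congruences modulo powers of the prime above $3$ among the relevant $\wp$-values, should gain one further power of $3$ and reduce the estimate for $D$ to the estimate for $D/q$; the base case $n=1$ is checked directly (for $t=1$ this is essentially in \cite{yuka}). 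The delicate point is the bookkeeping needed to land exactly on $n-\epsilon_{tD}$: the interplay between the seed value $L^{(\mathrm{alg})}(C_t,1)$, the fact that split primes donate a full power of $3$ more than inert ones, and the exceptional behaviour when $D$ has an inert prime — precisely the phenomenon already visible in Lemma \ref{lemV} and at the end of the proof of Theorem \ref{thmPhi}, where the term analogous to the $-\epsilon\#(V)$ of \eqref{sumE} is no longer controlled to valuation $\tfrac23$ and one power of $3$ is lost. Carrying out this cubic-twist analogue of Zhao's $3$-adic congruence analysis cleanly enough to see that $-\epsilon_{tD}$ is the only loss is the heart of the argument.
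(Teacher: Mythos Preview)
Your reduction to an explicit character sum is correct: the single imprimitive value is indeed primitive here, and after the $c\mapsto-c$ symmetry and the $\zeta$-computation one does get
\[
\ord_3\bigl(L^{(\mathrm{alg})}(C_{tD},1)\bigr)=\ord_3\Biggl(\sum_{c\in\mathcal{C}}\left(\frac{tD}{\mathfrak{b}}\right)_3\frac{1}{1-\wp\bigl(\tfrac{c\Omega}{M},\mathcal{L}\bigr)}\Biggr),
\]
with the constant terms dropping out because $\left(\frac{tD}{\cdot}\right)_3$ is nontrivial on $(\o/M\o)^\times$. That part is fine.

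The gap is the inductive step, which you do not actually carry out. Splitting $c$ by the Chinese remainder theorem and applying the addition formula for $\wp$ turns $\bigl(1-\wp(c\Omega/M)\bigr)^{-1}$ into a complicated rational function of $q$-division and $M'$-division values; summing a cubic character against this does not visibly produce one extra power of $3$. The divisibility $3\mid\#(\o/q\o)^\times$ gives nothing by itself, because the summand is not constant on cosets of the index-$3$ subgroup, and you offer no congruence among the relevant $\wp$-values to force the required cancellation. You yourself flag this as ``the heart of the argument'' and leave it undone; as written this is a plan, not a proof.

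The paper's proof avoids exactly this difficulty, and by a genuinely different mechanism. Instead of seeking $3^n$-cancellation in a single character sum, it averages over \emph{all} $3^n$ twists $D_\alpha$: orthogonality gives $\sum_\alpha\left(\frac{D_\alpha}{\mathfrak{b}}\right)_3=3^n$ on $V$ and $0$ elsewhere, so the factor $3^n$ appears for free in $\Phi_{tD}$ (Theorem~\ref{thmPhi} and Corollary~\ref{Chi}), with only the easy estimate $\ord_3\bigl(1-\wp(c\Omega/M)\bigr)=\tfrac13$ needed on the analytic side. The induction on $n$ is then of a different kind: one strips from $\Phi_{tD}$ the imprimitive terms with $k(D_\beta)<n$ (bounded by the induction hypothesis after reinstating their Euler factors, which themselves donate the missing powers of $3$), and finally isolates the individual term for $D$ from the residual sum $\sum_{k(D_\alpha)=n}$ by a second induction using linear combinations of the $\Phi_{tD}^{(\chi)}$, losing only $\ord_3(\omega-\omega^2)=\tfrac12$, which is then recovered from the integrality of $\ord_3\bigl(L^{(\mathrm{alg})}(C_{tD},1)\bigr)$. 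In short, the $3^n$ comes from character orthogonality over the family of twists, not from $3$-adic congruences among division values; your proposed route would need an argument of a type neither you nor the paper supplies.
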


\begin{proof} We will prove this by induction on $n$. Suppose first that $n=1$, so that $D=p$ or $p^2$. We have $\frac{L(\overline{\psi}_{1},1)}{\Omega_1}=L^{(\mathrm{alg})}(C_1,1)=\frac{1}{3}$, $\frac{L(\ovl{\psi}_{2},1)}{\Omega_2}=L^{(\mathrm{alg})}(C_2,1)=\frac{1}{2}$ and $\frac{L(\ovl{\psi}_{4},1)}{\Omega_4}=L^{(\mathrm{alg})}(C_4,1)=1$. In the case when $p$ is inert, we have
\begin{align*}\Phi_{tp}&=\frac{L_S(\ovl{\psi}_{t},1)}{\Omega_t}+\frac{L_S(\ovl{\psi}_{t p},1)}{\Omega_t}+\frac{L_S(\ovl{\psi}_{tp^2},1)}{\Omega_t}\\
&=\left(1-\frac{\ovl{\psi}_t((p))}{\mathrm{N}p}\right)\frac{L(\ovl{\psi}_{t},1)}{\Omega_t}+\frac{L(\ovl{\psi}_{tp},1)}{\Omega_t}+\frac{L(\ovl{\psi}_{t p^2},1)}{\Omega_t}\\
&=\left(\frac{p^2+p}{p^2}\right)\frac{L(\ovl{\psi}_{t},1)}{\Omega_t}+\frac{L(\ovl{\psi}_{t p},1)}{\Omega_t}+\frac{L(\ovl{\psi}_{tp^2},1)}{\Omega_t},
\end{align*}
since we have $\left(\frac{a}{b}\right)_3=1$ for any coprime integers $a$ and $b$ with $(b,3)=1$ (see, for example, \cite[Proposition 7.7]{Lemm}).
If $p$ is a split prime, say $p\o=(\pi)(\ovl{\pi})$ with $\pi\equiv 1\bmod 3\o$, then we have
\begin{align*}\Phi_{tp}=\left(\frac{\pi-\left(\frac{t}{\pi}\right)_3}{\pi}\right)\left(\frac{\ovl{\pi}-\left(\frac{t}{\ovl{\pi}}\right)_3}{\ovl{\pi}}\right)\frac{L(\ovl{\psi}_{t},1)}{\Omega_t}+\frac{L(\ovl{\psi}_{t p},1)}{\Omega_t}+\frac{L(\ovl{\psi}_{tp^2},1)}{\Omega_t}.
\end{align*}
Thus $\ord_3\left(\left(\frac{\pi-\left(\frac{t}{\pi}\right)_3}{\pi}\right)\left(\frac{\ovl{\pi}-\left(\frac{t}{\ovl{\pi}}\right)_3}{\ovl{\pi}}\right)\right)\geq \frac{1}{2}+\frac{1}{2}=1$. If $t=1$, then $\left(\frac{t}{\pi}\right)_3=1$, so $\ord_3\left(\left(\frac{\pi-1}{\pi}\right)\left(\frac{\ovl{\pi}-1}{\ovl{\pi}}\right)\right)\geq 1+1=2$. Let $\chi$ be a character on $\Z/3\Z$ sending $1$ to $\omega$. Then 
$$\Phi^{(\chi)}_{tp}-\omega^2 \Phi_{tp}=(1-\omega)\frac{L_S(\ovl{\psi}_{t},1)}{\Omega_t}+(\omega-\omega^2)\frac{L(\ovl{\psi}_{tp},1)}{\Omega_t}.$$

Then we have $\ord_3\left(\Phi^{(\chi)}_{tp}-\omega^2 \Phi_{tp}\right)\geq \frac{2}{3}(1-\varepsilon_{tp})$ and $\ord_3(1-\omega)=\frac{1}{2}$, so $\ord_3\left(\frac{L(\ovl{\psi}_{tp},1)}{\Omega_t}\right)\geq \frac{1}{6}-\frac{2}{3}\varepsilon_{tp}$. Since $L^{(\mathrm{alg})}(C_{tp},1) \in \Q$ by \cite{bsd2} (in fact an integer by \cite{stephens} in our case), its $3$-adic valuation must be an integer. Thus combining with Theorem \ref{thmPhi}, we obtain $\ord_3\left(L^{(\mathrm{alg})}(C_{tp},1)\right)\geq 1-\varepsilon_{tp}$, as required. The same holds for $L^{(\mathrm{alg})}(C_{tp^2},1)$.

Now assume $n>1$. Given $\alpha,\beta\in (\Z/3\Z)^n$ we write $\beta<\alpha$ if $k(D_\beta)<k(D_\alpha)$.
$$\Phi_{tD}=\frac{L_S(\ovl{\psi}_{t},1)}{\Omega_t}+\sum_{\beta<\alpha}\frac{L_S(\ovl{\psi}_{tD_\beta},1)}{\Omega_t}+\sum_{\alpha}\frac{L(\ovl{\psi}_{tD_\alpha},1)}{\Omega_t},$$
where the last summands are primitive. We know that  
\begin{align*}\frac{L_S(\ovl{\psi}_{t},1)}{\Omega_t}&=\prod_{\substack{p\mid D \\ \text{inert}}}\left(\frac{p^2+p}{p}\right)\prod_{\substack{p\mid D\\ p\o=(\pi)(\ovl{\pi})}}\left(\frac{\pi-\left(\frac{t}{\pi}\right)_3}{\pi}\right)\left(\frac{\ovl{\pi}-\left(\frac{t}{\ovl{\pi}}\right)_3}{\ovl{\pi}}\right)\frac{L(\ovl{\psi}_{t},1)}{\Omega_t}
\end{align*}
and thus $\ord_3\left(\frac{L_S(\ovl{\psi}_{t},1)}{\Omega_t}\right)\geq n-\e_{tD}$ (note again that $\left(\frac{t}{\pi}\right)_3=1$ when $t=1$). Next, given  $\beta<\alpha$, we have 
\begin{small}
\begin{align*}\frac{L_S(\ovl{\psi}_{tD_\beta},1)}{\Omega_t}&=\prod_{\substack{p\mid (D_\alpha/D_\beta)\\\text{inert}}}\left(\frac{p^2+p}{p}\right) \prod_{\substack{p\mid (D_\alpha/D_\beta)\\  p\o=(\pi)(\ovl{\pi}) }}\left(\frac{\pi-\left(\frac{tD_\beta}{\pi}\right)_3}{\pi}\right)\left(\frac{\ovl{\pi}-\left(\frac{tD_\beta}{\ovl{\pi}}\right)_3}{\ovl{\pi}}\right)\frac{L(\ovl{\psi}_{tD_\beta},1)}{\Omega_t}.
\end{align*}
\end{small}

By the induction hypothesis, we have $\ord_3\left(\frac{L(\ovl{\psi}_{tD_\beta},1)}{\Omega_t}\right)=r_\beta+s_\beta +\e_{tD_\beta}-1$, where $r_\beta$ and $s_\beta$ denote the numbers of inert and split primes dividing $D_\beta$, respectively. It follows that 
\begin{align*}\ord_3\left(\frac{L_S(\ovl{\psi}_{tD_\beta},1)}{\Omega_t}\right)&=(r-r_\beta)+\frac{1}{2}(2s-2s_\beta)+r_\beta+s_\beta-\e_{tD_\beta}\\
&=r+s-\e_{tD_\beta}\geq n-\e_{tD}.
\end{align*}
Thus we have shown that $\ord_3\left(\sum_{\alpha}\frac{L(\ovl{\psi}_{tD_\alpha},1)}{\Omega_t}\right)\geq n-\e_{tD}.$
We claim that this lower bound applies to the individual summand, which proves the theorem. We will prove this claim by applying another layer of induction.

Suppose $D=p_1^{e_1}\cdots p_n^{e_n}$. Given a subset $I\subset \{1,\ldots ,n\}$, define
$$\Sigma_I:=\sum\limits_{\substack{\alpha_i=e_i \\i\in I}}\frac{L(\overline{\psi}_{tD_\alpha},1)}{\Omega_t}.$$
We claim that $\ord_3\left(\sum\limits_{\substack{\alpha_i=e_i \\i\in I}}\frac{L(\overline{\psi}_{tD_\alpha},1)}{\Omega_t}\right)\geq  \ord_3(\Phi_{tD})-\frac{1}{2}$ for any $I$. We will prove this by induction on $\#(I)$. First, consider the case when $\#(I)=1$. Without loss of generality, we may assume $i=1$ and first suppose $e_1=1$. Let $\chi$ be a character  of $(\Z/3\Z)^n$ sending $\delta_1$ to $\omega$ an $\delta_i$ to $1$ for $i\neq 1$. Then we see that
$$\ord_3\left((\omega-\omega^2)\sum_{\alpha_1=1}\frac{L(\ovl{\psi}_{tD_\alpha},1)}{\Omega_t}\right)\geq \ord_3\left(\Phi^{(\chi)}_{tD}-\omega^2 \Phi_{tD}\right).$$
Thus Corollary \ref{Chi} gives $\mathrm{ord}_3\left(\sum_{\alpha_1=1}\frac{L(\ovl{\psi}_{tD_\alpha},1)}{\Omega_t}\right)\geq \ord_3(\Phi_{tD})-\frac{1}{2}$. Similarly, if $e_1=2$, we can show $\mathrm{ord}_3\left(\sum_{\alpha_1=2}\frac{L(\ovl{\psi}_{tD_\alpha},1)}{\Omega_t}\right)\geq \ord_3(\Phi_{tD})-\frac{1}{2}$.

Suppose now that the claim holds for any $I\subset \{1,\ldots , n\}$ with $1\leq \#(I)\leq k$. Then we consider the case when $\#(I)=k+1$, and without loss of generality, we may assume $I=\{1,\ldots , k+1\}$. By the induction hypothesis, 
$\mathrm{ord}_3(\Sigma_{\{1,\ldots , k\}})\geq  \ord_3(\Phi_{tD})-\frac{1}{2}$ and $\mathrm{ord}_3(\Sigma_{\{2,\ldots , k+1\}})\geq  \ord_3(\Phi_{tD})-\frac{1}{2}$. Now, 
\begin{align*}\Sigma_{\{1,\ldots , k\}}-\Sigma_{\{2,\ldots k+1\}}&=\sum\limits_{\substack{\alpha_i=e_i i\in \{2,\ldots k\}\\ \alpha_1=e_1, \alpha_{k+1}\neq e_{k+1}}}\frac{L(\overline{\psi}_{tD_\alpha},1)}{\Omega_t}-\sum\limits_{\substack{\alpha_i=e_i i\in \{2,\ldots k\} \\ \alpha_1\neq e_1, \alpha_{k+1}= e_{k+1}}}\frac{L(\overline{\psi}_{tD_\alpha},1)}{\Omega_t}\\
&=A-B,
\end{align*}
say. Now, $A+B+\Sigma_I=\Sigma_{\{2,\ldots , k\}}$ so $\mathrm{ord}_3(A+B+\Sigma_I)\geq  \ord_3(\Phi_{tD})-\frac{1}{2}$. On the other hand,
\mbox{$\Sigma_{\{1,\ldots k\}}+\Sigma_{\{2,\ldots k+1\}}=A+B+2\Sigma_I$} so $\mathrm{ord}_3(A+B+2\Sigma_I)\geq  \ord_3(\Phi_{tD})-\frac{1}{2}$.  It follows that $\mathrm{ord}_3(\Sigma_I)\geq  \ord_3(\Phi_{tD})-\frac{1}{2}$, as required.

Hence applying the claim to $I=\{1, \ldots, n\}$, we see that for any $\alpha\in (\mathbb{Z}/3\mathbb{Z})^n$ with $k(D_\alpha)=n$, we have
 $$\mathrm{ord}_3\left(\frac{L(\overline{\psi}_{tD_\alpha},1)}{\Omega_t}\right)\geq  \ord_3(\Phi_{tD})-\frac{1}{2}.$$
Since we know $\ord_3\left(L^{(\mathrm{alg})}(C_{tD},1)\right)$ is an integer, it follows from Theorem \ref{thmPhi} that $\ord_3(L^{\mathrm{alg}}(C_{tD},1)))\geq n-\e_{tD}$, as required.
\end{proof}

\begin{remark} If $k(D)=1$, $t\in \{2,4\}$ and $D\equiv 2,4,5,7\bmod 9$, the proof can be shortened significantly, since by a root number consideration (see Corollary \ref{root}), one of the two primitive Hecke $L$-values vanishes in the sum $\Phi_{tp}$. In particular, there is no need to introduce \eqref{PhiChi}.
\end{remark}

\section{The $3$-descent and the integrality of $\mathcal{S}_N$}\label{section2a}
In this section, we give a $3$-descent argument, and then discuss a consequence of Theorem \ref{m1}. Given any cube-free positive integer $N$ (not necessarily prime to $3$), we write 
$$E_N: y^2z=x^3-2^43^3N'^2z^3,$$
where $N'=\frac{N}{3^3}$ if $3^2\mid\mid N$ and $N'=N$ otherwise.
This is birationally equivalent to $C_N: x^3+y^3=N$, and over $\Q$ there is a $3$-isogeny $\phi$ to 
$$E_N': y^2z=x^3+2^4N^2z^3.$$

\begin{thm}\label{3descent} Let $C_{N}: x^3+y^3=N$, where $N> 1$ is a cube-free integer. Then the $3$-part of the Tate--Shafarevich group $\Sha(C_{N})[3]$ of $C_{N}$ over $\Q$ satisfies
$$\ord_3\left(\#\left(\Sha(C_{N})[3]\right) \right)\geq r(N)-t(N)-1-\rank(C_N),$$
where $r(N)$ is the number of distinct primes dividing $N$ which are inert in $\Q(\sqrt{-3})$, $t(N)=1$ if $N\equiv \pm 1 \bmod 9$, $t(N)=-1$ if $\ord_3(N)=1$ and $t(N)=0$ otherwise. 
\end{thm}

\begin{proof}

The isogeny $\phi: E_N \to E_N'$ is explicitly given in \cite[D\'{e}finition 1.1]{satge}. The result of Satg\'{e} \cite[Proposition 2.8]{satge} based on the work of Cassels \cite{cassels} gives us
$$
\dim_{\mb{F}_3}(\Sel(\Q, E_N[\phi]))-\dim_{\mb{F}_3}(\Sel(\Q, E_N'[\hat{\phi}]))=r(N)-t(N),
$$
where $\hat{\phi}$ denotes the dual isogeny, and we regard the Selmer groups of $\phi$ and $\hat{\phi}$ as vector spaces over the finite field $\mb{F}_3$ with $3$ elements. Recall that we have an exact sequence
$$0\to E_N'(\Q)/\phi(E_N(\Q)) \to \Sel(\Q, E_N[\phi])\to \Sha(E_N)[\phi]\to 0.$$
Now, we know that $E_N(\Q)_{\mathrm{tor}}=0$, $E_N'(\Q)_{\mathrm{tor}}=\{(0,1,0),(0,\pm 2^2 N,1)\}\simeq \Z/3\Z$. Recall also that $\rank (E_{N})=\rank (E'_{N})$ since they are isogeneous. The result now follows from the exact sequence
$$0\to \Sha(E_N)[\phi]\to \Sha(E_N)[3]\xto{\phi} \Sha(E_N')[\hat{\phi}],$$
which shows that $\ord_3\left(\#\left(\Sha(E_N)[3]\right)\right)\geq \ord_3\left(\#\left(\Sha(E_N)[\phi]\right)\right)$.
\end{proof}

Next, we discuss a corollary of Theorem \ref{m1}. Recall that for $N>2$,
$$\mathcal{S}_N=\frac{L^{(\mathrm{alg})}(C_N,1)}{\prod_{q \mathrm{ bad}}c_q}$$ 
is defined so that if $L(C_N,1)\neq 0$, the Birch--Swinnerton-Dyer conjecture predicts 
$$\mathcal{S}_N=\#(\Sha(C_N)).$$
The Tamagawa number divisibility in Theorem \ref{m1}, when combined with \cite[Theorem 1.1--Theorem 1.3]{rosu}, immediately gives the following.

\begin{cor}Let $N>2$ be a cube-free positive integer with $(N,3)=1$. Then 
\begin{enumerate}
\item $\mathcal{S}_N$ is a perfect square if $N$ is a product of split primes.
\item $2\mathcal{S}_N\in \Z$ if $N\equiv 2,7\bmod 9$.
\item $\mathcal{S}_N\in \Z$ otherwise.
\end{enumerate} 
Furthermore, we have the trace formula
$$\mathcal{S}_N=\frac{1}{3\prod_{q \mathrm{ bad}}c_q}\mathrm{Tr}_{K(j(\o_{3N}))/K}\left(\sqrt[3]{N}\frac{\Theta_K(N\omega)}{\Theta_K(\omega)}\right)$$
where $\Theta_K(z)=\sum_{a,b\in \Z}e^{2\pi i z (a^2+b^2-ab)}$ is the theta function of weight one associated to $K$, and $K(j(\o_{3N}))$ is the ring class field of the order $\o_{3N}=\Z+3N\o$.
\end{cor}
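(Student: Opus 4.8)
The plan is to deduce the corollary by tracking the $\ell$-adic valuation of $\mathcal{S}_N=L^{(\mathrm{alg})}(C_N,1)/\prod_{q}c_q$ one prime at a time, feeding in Theorem~\ref{m1}, the Tamagawa computation of Lemma~\ref{tam}, the root number formula of Proposition~\ref{lroot}, and the results of Rosu~\cite{rosu}. First I would dispose of the trivial case: if $L(C_N,1)=0$ then $\mathcal{S}_N=0$, which is a perfect square and an integer, and the trace formula reads $0=0$; so I may assume $L(C_N,1)\neq 0$, whence $\epsilon(C_N/\Q)=+1$ and Proposition~\ref{lroot} forces $r(N)+t(N)$ to be odd. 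I would then record the two standing facts: $L^{(\mathrm{alg})}(C_N,1)\in\Z$ by \cite[Theorem~1]{stephens}, and, by Lemma~\ref{tam}, $\ord_3\!\left(\prod_q c_q\right)=s(N)+t(N)$, $\ord_2\!\left(\prod_q c_q\right)\le 1$ with equality only when $N\equiv 2,7\bmod 9$, and $\ord_\ell\!\left(\prod_q c_q\right)=0$ for every prime $\ell\ge 5$.

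The heart of the argument is the prime $3$. If $N$ has at least one inert prime divisor, Theorem~\ref{m1} gives $\ord_3\!\left(L^{(\mathrm{alg})}(C_N,1)\right)\ge k(N)-1=r(N)+s(N)-1$, so $\ord_3(\mathcal{S}_N)\ge r(N)-1-t(N)$. When $t(N)=0$ this is $\ge r(N)-1\ge 0$; when $t(N)=1$ the parity of $r(N)+t(N)$ forces $r(N)$ even, hence $r(N)\ge 2$, and the bound becomes $\ge r(N)-2\ge 0$. Thus $\ord_3(\mathcal{S}_N)\ge 0$ whenever $N$ is not a product of split primes. If instead $N$ is a product of split primes, then $r(N)=0$ and Theorem~\ref{m1} gives the sharper $\ord_3\!\left(L^{(\mathrm{alg})}(C_N,1)\right)\ge k(N)=s(N)$, so $\ord_3(\mathcal{S}_N)\ge -t(N)\ge -1$. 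For the other primes, $\ord_\ell(\mathcal{S}_N)=\ord_\ell\!\left(L^{(\mathrm{alg})}(C_N,1)\right)\ge 0$ for $\ell\ge 5$, while $\ord_2(\mathcal{S}_N)\ge\ord_2\!\left(L^{(\mathrm{alg})}(C_N,1)\right)-1\ge -1$, with $\ord_2(\mathcal{S}_N)\ge 0$ unless $N\equiv 2,7\bmod 9$. Combining these bounds already yields (2) and yields (3) for every $N$ that is not a product of split primes.

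It remains to treat products of split primes, and here I would invoke Rosu \cite[Theorems~1.1--1.3]{rosu}: for such $N$, $\mathcal{S}_N=m^2/3^{2k}$ for some $m\in\Z$ and $k\ge 0$, so $\ord_\ell(\mathcal{S}_N)\ge 0$ for all $\ell\neq 3$ and $\ord_3(\mathcal{S}_N)$ is even. Combined with the bound $\ord_3(\mathcal{S}_N)\ge -1$ from the previous paragraph, this forces $\ord_3(\mathcal{S}_N)\ge 0$, whence $3^k\mid m$ and $\mathcal{S}_N=(m/3^k)^2$ is a perfect square; since a perfect square is in particular an integer, this also completes (2) and (3) for all-split $N$. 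Finally the trace formula is exactly the formula of \cite[Theorems~1.1--1.3]{rosu}, once one checks that the normalisation of $L^{(\mathrm{alg})}(C_N,1)$ via the period $\Omega_N$ in \eqref{periods} matches Rosu's.

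The computations are routine; the one genuinely delicate point is the parity at $3$ for $N$ a product of split primes. There Theorem~\ref{m1} by itself leaves open $\ord_3(\mathcal{S}_N)=-1$, and it is essential that Rosu's description of $\mathcal{S}_N$ in terms of weight-one theta functions forces this valuation to be even, so that the two inputs together close the gap and upgrade ``square up to an even power of $3$ in the denominator'' to ``square''. A secondary point needing care is the precise matching of period and $L$-value normalisations between this paper and \cite{rosu} when transcribing the trace formula.
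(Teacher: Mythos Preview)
Your proposal is correct and follows essentially the same route as the paper's proof: both combine Theorem~\ref{m1}, Lemma~\ref{tam}, the integrality result of Stephens, the root-number constraint from Proposition~\ref{lroot}, and Rosu's description $\mathcal{S}_N=m^2/3^{2k}$ in the all-split case. Your prime-by-prime bookkeeping is slightly more systematic---you invoke the parity $r(N)+t(N)$ odd to exclude all bad $(r,t)$ pairs at once, whereas the paper singles out only the boundary case $r(N)=t(N)=1$---but the substance is identical.
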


\begin{proof} If $N$ is a product of split primes, then we have $N\equiv 1\bmod 3$, and $\mathcal{S}_N\neq 0$ only if $N\equiv 1\bmod 9$ by Proposition \ref{lroot}. It follows from Theorem \ref{m1} and Lemma \ref{tam} that $3\mathcal{S}_N\in \Z$. By \cite[Theorem 1.2]{rosu}, we have $\mathcal{S}_N=\frac{k^2}{3^{2\ell}}$ for some $k, \ell\in \Z$. If the square of a rational number $\left(\frac{k}{3^{\ell}}\right)^2$ is an element of $\frac{1}{3}\Z$, it must be an element of $\Z$, and thus (1) follows. Part (2) and (3) follow from Theorem \ref{m1}, Lemma \ref{tam} and the fact that $L^{(\mathrm{alg})}(C_N,1)\in \Z$ shown in \cite{stephens} or in Section \ref{section3} of this paper. Note that if $r(N)=t(N)=1$, we have $\mathcal{S}_N=0$ since the global root number $\epsilon(C_N/\Q)=-1$ by Proposition \ref{lroot}.  Finally, the trace formula is given in \cite[Theorem 1.3]{rosu}. 
\end{proof}

For a more detailed description of $k$ and $\ell\in \Z$ satisfying $\mathcal{S}_N=\frac{k^2}{3^{2\ell}}$ in terms of theta functions of weight $1/2$, we refer to \cite[Theorem 1.2]{rosu}.

\section{The $2$-adic valuation of the algebraic part of central $L$-values}\label{section3}

 Let $N>2$ be a cube-free integer with $(N,3)=1$, and let $C_{N}: x^3+y^3=N$. The aim of this section is to give another proof of the integrality result  
 $$L^{(\mathrm{alg})}(C_{N}, 1)\in \Z$$
of Stephens \cite{stephens} by directly studying the remaining $2$-adic valuation of $L^{(\mathrm{alg})}(C_{N}, 1)$. The approach given here has the advantage that it gives us an insight into how the $L$-values can be divisible by a large power of $2$ for quadratic or sextic twists of the curve $y^2=x^3-2^4 3^3$. Recall that $\psi_{N}$ denotes the Grossencharacter of $C_N $ over $K$. We write $N=tD$, where $t\in \{1,2,4\}$, $2\nmid D$ and $k(D)=n$. Let $E_{tD}$ be given by \eqref{eqE} as before. We write $\Omega_{tD}$ for the real period introduced in \eqref{periods}, and let $\mathcal{L}_{tD}=\Omega_{tD}\o$ denote its period lattice. Recall that the conductor of $\psi_{tD}$ divides $\mathfrak{f}=f\o$, where $f = \mathrm{rad}(3tD)$. Let $S$ be the set of primes of $K$ dividing $3tD$. 

Then again by \cite[Proposition 48]{coates1}, we have $\Gal(K(C_{tD, \mf{f}})/K)=\Gal(K(\mf{f})/K)\simeq (\o/ M \o)^\times$, where $M=\mathrm{rad}(tD)$. We set $\mathcal{C}$ to be a set of elements of $\o$ prime to $M$ such that $c\mod M$ runs over $(\o/M\o)^\times$ precisely once as $c$ runs over $\mathcal{C}$, and such that $c\in \mathcal{C}$  implies $-c\in \mathcal{C}$. Define
\begin{align*}
 \mathcal{B}&=\{(3c+\epsilon M) : c\in \mathcal{C}\} \mbox{ if $t=1$, and}\\
\mathcal{B}&= \{(6c+\epsilon M) : c\in \mathcal{C}\} \mbox{ if $2\mid t$,} 
\end{align*}
where the sign $\epsilon\in \{\pm 1\}$ of $M$ is again chosen so that $\epsilon M\equiv 1\bmod 3$.
Suppose first that $2\mid t$. Then we have $(\o/ M \o)^\times\simeq (\o/2\o)^\times \times (\o/D\o)^\times$, and we take $\sigma_\mf{t}\in \Gal(K(\mf{f})/K)$ to be an element corresponding to a generator of $(\o/2\o)^\times$. Then the Artin symbol $\sigma_\mathfrak{b}\sigma_{\mf{t}^i}$ in the extension $K(\mf{f})/K$ runs over the Galois group $\Gal(K(\mathfrak{f})/K)$ precisely three times as $\mathfrak{b}$ runs over $\mathcal{B}$ and $i$ runs over $\{0,1,2\}$.
Thus, again by \cite[Proposition 5.5]{gol-sch}, for any $\alpha\in (\Z/3 \Z)^n$ we have
\begin{align*}
\frac{3 L_S(\overline{\psi}_{tD_\alpha},1)}{\Omega_{tD_\alpha}}&=\frac{1}{6D}\sum_{i=0}^2\sum_{\mathfrak{b}\in \mathcal{B}}\mathcal{E}^*_1\left(\frac{\Omega_{tD_\alpha}}{6D}, \mathcal{L}_{tD_\alpha}\right)^{\sigma_{\mathfrak{b}}\sigma_{\mf{t}^i}}.
\end{align*}
Similarly, if $t=1$, we have
\begin{align*}
\frac{L_S(\overline{\psi}_{D_\alpha},1)}{\Omega_{D_\alpha}}&=\frac{1}{3D}\sum_{\mathfrak{b}\in \mathcal{B}}\mathcal{E}^*_1\left(\frac{\Omega_{D_\alpha}}{3D}, \mathcal{L}_{D_\alpha}\right)^{\sigma_\mathfrak{b}}.
\end{align*}
It follows that in both cases we have
$$\ord_2\left(\frac{L_S(\overline{\psi}_{tD_\alpha},1)}{\Omega_{tD_\alpha}}\right)\geq \ord_2\left(\frac{1}{f}\sum_{\mathfrak{b}\in \mathcal{B}}\mathcal{E}^*_1\left(\frac{\Omega_{tD_\alpha}}{f}, \mathcal{L}_{tD_\alpha}\right)^{\sigma_\mathfrak{b}}\right).$$
Thus, if we can show $\ord_2\left(\frac{1}{f}\sum_{\mathfrak{b}\in \mathcal{B}}\mathcal{E}^*_1\left(\frac{\Omega_{tD_\alpha}}{f}, \mathcal{L}_{tD_\alpha}\right)^{\sigma_\mathfrak{b}}\right)\geq 0$, then it follows that $\ord_2\left(L^{(\mathrm{alg})}(C_{N}, 1)\right)\geq 0$.
We define
$$V=\{c\in \mathcal{C}: \left(\frac{p}{\mathfrak{b}}\right)_3=1 \text{ for all }p\mid D \text{, where }\mathfrak{b}\in \mathcal{B} \text{ corresponds to } c\}.$$

Recall that
$$\Phi_{tD}=\sum_{\alpha\in (\Z/3\Z)^n}\frac{L_S(\overline{\psi}_{tD_\alpha},1)}{\Omega_t}.$$

\begin{thm}\label{thmPhi2} For any cube-free odd integer $D>1$ prime to $3$, we have
$$\ord_2\left(\Phi_{D}\right)\geq 0, \quad \quad\ord_2\left(\Phi_{2D}\right)\geq -\frac{2}{3} \quad \quad \text{and} \quad \quad \ord_2\left(\Phi_{4D}\right)\geq -\frac{1}{3}.$$
\end{thm}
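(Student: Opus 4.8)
The plan is to run the $2$-adic analogue of the proof of Theorem~\ref{thmPhi}, the crucial new input being that the curve $A\colon y^2=4x^3-1$ — which over $\Q$ is a quadratic twist of $C_1$ by $3$ — has good reduction at $2$ (recall $C_1$ has conductor $27$, and twisting by the odd integer $3$ does not affect the reduction at $2$). I would combine this with the symmetry $c\leftrightarrow -c$ that is already built into the set $V$ to gain one extra power of $2$ over what a naive bound would give.

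First I would recall the identity reached in the proof of Theorem~\ref{thmPhi} (culminating in \eqref{sumE}): with $\Omega,\mc{L}$ attached to $A$ and $\mf{b}=3c+\epsilon M$,
\begin{equation*}
\Phi_{tD}=\frac{3^n}{f}\cdot\frac{3\epsilon\,\sqrt[3]{t}}{2}\sum_{c\in V}\left(\frac{t}{\mf{b}}\right)_3\frac{1}{1-\wp\!\left(\tfrac{c\Omega}{M},\mc{L}\right)}\ -\ \delta\cdot\frac{3^n}{f}\,\epsilon\,\#(V),
\end{equation*}
where $\delta=1$ if $t=1$ (in which case $\sqrt[3]{t}=1$ and $\left(\tfrac{t}{\mf{b}}\right)_3=1$) and $\delta=0$ otherwise. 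Working throughout with the extension of the usual $2$-adic valuation to $\ovl{\Q}$ normalised by $\ord_2(2)=1$, one has $\ord_2(3^n/f)=0$ for $t=1$ and $-1$ for $t\in\{2,4\}$ (since $f=\mathrm{rad}(3tD)$ and $D$ is prime to $6$), while $\ord_2(\sqrt[3]{t})=0,\tfrac13,\tfrac23$ respectively; since $\Phi_{tD}\in\Q$ and $\#(V)\in\Z$, the three asserted bounds all follow at once from the single estimate
\begin{equation*}
\ord_2\!\left(\sum_{c\in V}\left(\frac{t}{\mf{b}}\right)_3\frac{1}{1-\wp\!\left(\tfrac{c\Omega}{M},\mc{L}\right)}\right)\ \geq\ 1,
\end{equation*}
because the $\tfrac{3\epsilon}{2}$ then contributes exactly $-1$, which is cancelled by this $+1$.

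The factor $2$ in the estimate is forced by the pairing $c\leftrightarrow -c$: as $\gcd(2,M)=1$ no element of $V$ is fixed, $\wp$ is even, and $\left(\tfrac{t}{\mf{b}_{-c}}\right)_3=\left(\tfrac{t}{\mf{b}_{c}}\right)_3$ because $-1$ is a cube, so the sum equals $2$ times the sum over the orbits $\{c,-c\}$. Hence it remains only to show that $1-\wp(c\Omega/M,\mc{L})$ is a unit at $2$ for each $c\in V$; then every orbit term is $2$-integral and, the cubic residue symbols being roots of unity, so is the whole orbit sum. For this I would argue that the point $P_c=\tfrac{c\Omega}{M}\bmod\mc{L}$ has exact order $M$ on $A$, with $M=\mathrm{rad}(tD)$ prime to $6$; since $A$ has good reduction at $2$, reduction modulo a prime $\mf{P}\mid 2$ is injective on the prime-to-$2$ torsion, so $\wp(c\Omega/M,\mc{L})$ is $\mf{P}$-integral and $\ovl{P_c}$ has exact order $M$ in $\tilde A(\ovl{\mb{F}}_2)$. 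On the other hand, rescaling the values $\wp(\tfrac{\Omega_1}{3},\mc{L}_1)=3$, $\wp'(\tfrac{\Omega_1}{3},\mc{L}_1)=9$ computed in the proof of Theorem~\ref{thmPhi} gives $\wp(\tfrac{\Omega}{3},\mc{L})=1$ and $\wp'(\tfrac{\Omega}{3},\mc{L})=\sqrt3$, so $1$ is the $x$-coordinate of the point $Q=(1,\sqrt3)$ of exact order $3$; as $\gcd(M,3)=1$, the reductions $\ovl{P_c}$ and $\pm\ovl{Q}$ have coprime exact orders $>1$ and so cannot coincide, whence $\wp(c\Omega/M,\mc{L})\not\equiv 1\pmod{\mf{P}}$ and $1-\wp(c\Omega/M,\mc{L})$ is a $2$-adic unit. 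Since $\Phi_{tD}\in\Q$, the resulting bound on $\ord_{\mf{P}}(\Phi_{tD})$ is its usual $2$-adic valuation.

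The main obstacle is this last step: one must be careful that $A$ genuinely has good reduction at $2$ (it is a twist of $C_1$, but only by $3$) and that the reduction map is injective on the relevant torsion and carries $\wp(c\Omega/M,\mc{L})$ to the $x$-coordinate of $\ovl{P_c}$. Everything else is bookkeeping: the $c\leftrightarrow -c$ pairing supplies exactly the factor of $2$ that cancels the $\tfrac12$ from the $\zeta$-addition formula in \eqref{sumE}, and the three values of $\ord_2(\sqrt[3]{t})$ produce the bounds $0,-\tfrac23,-\tfrac13$. This is also the mechanism by which, as noted after Theorem~\ref{m2}, the $2$-adic valuation can be pushed up for quadratic or sextic twists, where $t$ contributes a much larger $\ord_2$.
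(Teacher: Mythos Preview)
Your overall strategy --- reduce to the identity \eqref{sumE}, pair $c$ with $-c$ to extract a factor of $2$, then show each $1-\wp(c\Omega/M,\mc{L})$ is a $2$-adic unit via good reduction --- is exactly the paper's. But two of the supporting claims are false, and both sit at the spot you yourself flag as ``the main obstacle''.

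First, the curve $A\colon y^2=4x^3-1$ does \emph{not} have good reduction at $2$. Your reasoning (``twisting $C_1$ by the odd integer $3$ does not affect reduction at $2$'') fails because the quadratic character attached to $\Q(\sqrt{3})$ has conductor $12$, ramified at $2$; concretely, $A$ is $\Q$-isomorphic to $y^2=x^3-16$, which has Kodaira type $\mathrm{II}^*$ at $2$ and conductor $432$. With additive reduction the identity component of the special fibre is $\mb{G}_a$, and your injectivity-of-reduction argument collapses. The paper avoids this by working on the $\mc{L}_1$-curve $y^2=4x^3-27$, whose minimal model $Y^2+Y=X^3-7$ has discriminant $-3^9$ and hence good reduction at $2$; since $1-\wp(c\Omega/M,\mc{L})=\tfrac{1}{3}\bigl(3-\wp(c\Omega_1/M,\mc{L}_1)\bigr)$, the $2$-adic valuations agree and nothing is lost by this switch.

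Second, $M=\mathrm{rad}(tD)$ is \emph{not} prime to $6$ when $t\in\{2,4\}$: then $M=2\,\mathrm{rad}(D)$ is even, so your point $P_c$ has exact order $2\Delta$ and is not prime-to-$2$ torsion. The paper circumvents this in Section~\ref{section3} by choosing, for $2\mid t$, a different system of representatives (bringing in an auxiliary $\sigma_{\mf t}$ generating $(\o/2\o)^\times$) so that the Eisenstein series are evaluated at $c\Omega_t/D$ rather than $c\Omega_t/M$; the relevant torsion point then has odd order and the good-reduction argument on the $\mc{L}_1$-curve goes through cleanly. Your route via the Section~\ref{section2} formula can be salvaged --- on the $\mc{L}_1$-curve the $2$-part of $P_c$ lies in the formal group (the curve is supersingular at $2$), so $P_c$ and its odd part have the same nonzero reduction --- but that extra step is missing from your sketch.
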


\begin{proof}
Note first that $\ord_2(3\Phi_{tD})=\ord_2(\Phi_{tD})$. Following the methods of the proof of Theorem \ref{thmPhi}, we obtain
\begin{align*}
\ord_2\left(\Phi_{tD}\right)&\geq\ord_2\left(\frac{1}{f}\sum_{\mathfrak{b}\in \mathcal{B}}\left(\sum_{\alpha\in (\Z/3\Z)^n} \left(\frac{D_\alpha}{\mathfrak{b}}\right)_3 \right)\mathcal{E}^*_1\left(\frac{\Omega_t}{f},\mathcal{L}_t\right)^{\sigma_\mathfrak{b}}\right)\\
&=\ord_2\left(\frac{3^n}{f}\sum_{c\in V}\mathcal{E}^*_1\left(\frac{\Omega_t}{f},\mathcal{L}_t\right)^{\sigma_\mathfrak{b}}\right),
\end{align*}
because $\sum_{\alpha\in (\Z/3\Z)^n} \left(\frac{D_\alpha}{\mathfrak{b}}\right)_3=3^n$ if $\left(\frac{p}{\mathfrak{b}}\right)_3=1$ for all $p\mid D$ and $\sum_{\alpha\in (\Z/3\Z)^n} \left(\frac{D_\alpha}{\mathfrak{b}}\right)_3=0$ otherwise. Since $f=3D$ or $6D$ according as $t=1$ or $t\neq 1$, it suffices to show

$$\ord_2\left(\sum_{c\in V}\left(\frac{t}{\mf{b}}\right)_3\mathcal{E}^*_1\left(\frac{\epsilon \Omega_t}{3}+\frac{c\Omega_t}{D},\mathcal{L}_t\right)\right)\geq
 \left\{ \begin{array}{ll}
0 &\mbox{if $t=1$} \\
1/3 &\mbox{if $t=2$} \\
2/3 & \mbox{if $t=4$.}
       \end{array} \right.
$$

Recall that $\Omega_t=\frac{\Omega_1}{\sqrt[3]{t}}$, and that $\mathcal{E}^*_1$ is homogeneous of degree $-1$. Since we still have $-c\in V$ whenever $c\in V$, \eqref{sumE} gives

$$
\sum_{c\in V}\left(\frac{t}{\mf{b}}\right)_3\mathcal{E}^*_1\left(\frac{\epsilon \Omega_t}{3}+\frac{c\Omega_t}{D},\mathcal{L}_t\right)=
 \left\{ \begin{array}{ll}
\frac{1}{2}\sum\limits_{c\in V}\frac{9\epsilon }{3-\wp\left(\frac{c\Omega_1}{D},\mathcal{L}\right)}-\epsilon \#(V)&\mbox{if $t=1$} \\
\frac{\sqrt[3]{t}}{2}\sum\limits_{c\in V}\left(\frac{t}{\mf{b}}\right)_3\left(\frac{9\epsilon}{3-\wp\left(\frac{c\Omega_1}{D},\mathcal{L}_1\right)}\right)& \mbox{otherwise.}
       \end{array} \right.
$$

Recall that $\Omega_1$ and $\mathcal{L}_1$ correspond to the equation $y^2=4x^3-3^3$ with a Weierstrass form $A: Y^2+Y=X^3-7$ with discriminant $3^9$, so that in particular it is minimal at the prime $2$. We know $\wp(\frac{\Omega_1}{3}, \mathcal{L}_1)=3$ and $\wp'(\frac{\Omega_1}{3}, \mathcal{L}_1)=9$, and thus $3-\wp\left(\frac{c\Omega_1}{D}, \mathcal{L}_1\right)=X(Q)-X(P)$, where $Q=(3,4)$ is a point on $A$ of order $3$ and $P$ is a point of order $D$. We claim that $\ord_2\left(X(Q)-X(P))\right)=0$. Indeed, since $A$ has good reduction at $2$ and $P$ is a torsion point of order prime to $2$, we have $\ord_2(X(P))\geq 0$.  Suppose for a contradiction that $\ord_2(X(Q)-X(P))>0$. Then we have $X(\widetilde{Q})=X(\widetilde{D})$, where $\widetilde{\phantom{Q}}$ denotes reduction modulo $2$. It follows that either $P-Q$ or $P+Q$ lies in the reduction modulo $2$ map, and thus it must correspond to an element in the formal group of $A$ at $2$. But neither can be a torsion of order a power of $2$, since $3$ and $D$ are both prime to $2$. Finally, $\wp$ is an even function, $-1$ is a cubic residue, and $c\in V$ implies $-c\in V$. Therefore, we have
$$\ord_2\left(\sum\limits_{c\in V}\left(\frac{t}{\mf{b}}\right)_3\frac{9\epsilon}{3-\wp\left(\frac{c\Omega_1}{D},\mathcal{L}_1\right)}\right)\geq 1.$$
The result now follows on noting that $\ord_2\left(\frac{\sqrt[3]{t}}{2}\right)=\ord_2(t)/3-1$.
\end{proof}

Recall that we defined
$$\Phi_{tD}^{(\chi)}=\sum_{\alpha\in (\Z/3\Z)^n}\chi(\alpha)\frac{L_S(\overline{\psi}_{tD_\alpha},1)}{\Omega_t}$$
for any character $\chi: (\Z/3\Z)^n\to \C^\times$. Then the arguments for Corollary \ref{Chi} give

\begin{cor}\label{Chi2} For any cube-free odd integer $D>1$ prime to $3$ and for any character $\chi: (\Z/3\Z)^n\to \C^\times$, we have
$$\ord_2\left(\Phi^{(\chi)}_{D}\right)\geq 0, \quad \quad\ord_2\left(\Phi^{(\chi)}_{2D}\right)\geq -\frac{2}{3} \quad \quad \text{and} \quad \quad \ord_2\left(\Phi^{(\chi)}_{4D}\right)\geq -\frac{1}{3}.$$
\end{cor}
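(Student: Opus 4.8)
The plan is to repeat, for the $2$-adic estimate of Theorem \ref{thmPhi2}, exactly the twisting argument that passed from Theorem \ref{thmPhi} to Corollary \ref{Chi}. Fix $\chi\colon(\Z/3\Z)^n\to\C^\times$ and let $\delta_i$ be the $i$-th standard basis vector of $(\Z/3\Z)^n$. Since $\varphi^{(\chi,\mf{b})}\colon\alpha\mapsto\chi(\alpha)\left(\frac{D_\alpha}{\mf{b}}\right)_3$ is a one-dimensional character, pairing it against the trivial character gives $\sum_{\alpha}\varphi^{(\chi,\mf{b})}(\alpha)=3^n$ when $c\in V^{(\chi)}$ and $0$ otherwise, where $V^{(\chi)}=\{c\in\mc{C}:\left(\frac{p_i}{\mf{b}}\right)_3=\chi(\delta_i)^2\text{ for all }i\}$ and $\mf{b}=3c+\epsilon M$ (resp. $6c+\epsilon M$) according as $t=1$ (resp. $2\mid t$). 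Substituting this into the definition of $\Phi^{(\chi)}_{tD}$ and processing the Eisenstein series exactly as in the proof of Theorem \ref{thmPhi2} reduces the problem to bounding the $2$-adic valuation of $\sum_{c\in V^{(\chi)}}\left(\frac{t}{\mf{b}}\right)_3\mc{E}^*_1\!\left(\frac{\epsilon\Omega_t}{3}+\frac{c\Omega_t}{D},\mc{L}_t\right)$.

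Next I would use the observation already exploited for Corollary \ref{Chi}: either $V^{(\chi)}=\emptyset$, or there is $c'\in\mc{C}$ with $\left(\frac{p_i}{c'}\right)_3=\chi(\delta_i)^2$ for all $i$, in which case $V^{(\chi)}=c'V$ for the set $V$ of Theorem \ref{thmPhi2}. In particular $\#(V^{(\chi)})\in\{0,\#(V)\}$ and, crucially, $-V^{(\chi)}=V^{(\chi)}$ because $-V=V$. But closure under $c\mapsto-c$ (which is fixed-point-free, since $2c\equiv0\bmod M$ is impossible) is the only feature of $V$ used in the proof of Theorem \ref{thmPhi2}: the evaluation \eqref{sumE} of the Eisenstein sum and the pairing of $c$ with $-c$ — legitimate because $\wp$ is even, $-1$ is a cubic residue, and hence $\left(\frac{t}{\mf{b}}\right)_3$ is unchanged — carry over verbatim with $V$ replaced by $V^{(\chi)}$. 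This yields
\[
\sum_{c\in V^{(\chi)}}\left(\frac{t}{\mf{b}}\right)_3\mc{E}^*_1\!\left(\frac{\epsilon\Omega_t}{3}+\frac{c\Omega_t}{D},\mc{L}_t\right)=
\begin{cases}\tfrac12\sum\limits_{c\in V^{(\chi)}}\dfrac{9\epsilon}{3-\wp(c\Omega_1/D,\mc{L}_1)}-\epsilon\,\#(V^{(\chi)}),&t=1,\\ \dfrac{\sqrt[3]{t}}{2}\sum\limits_{c\in V^{(\chi)}}\left(\frac{t}{\mf{b}}\right)_3\dfrac{9\epsilon}{3-\wp(c\Omega_1/D,\mc{L}_1)},&t\in\{2,4\}.\end{cases}
\]

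Finally I would reuse the key $2$-adic input of Theorem \ref{thmPhi2}: writing $3-\wp(c\Omega_1/D,\mc{L}_1)=X(Q)-X(P)$ for a $3$-torsion point $Q$ and a $D$-torsion point $P$ on $A$, which has good reduction at $2$, the reduction-modulo-$2$ argument forces $\ord_2(X(Q)-X(P))=0$ since $3$ and $D$ are odd. Pairing $c$ with $-c$ then contributes a factor $2$, so $\ord_2\!\left(\sum_{c\in V^{(\chi)}}\left(\frac{t}{\mf{b}}\right)_3\frac{9\epsilon}{3-\wp(c\Omega_1/D,\mc{L}_1)}\right)\geq1$, while $\ord_2(\#(V^{(\chi)}))\geq0$ trivially. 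Combining with $\ord_2(3^n/f)=0$ for $t=1$, and $\ord_2(3^n/f)=-1$, $\ord_2(\sqrt[3]{t}/2)=\tfrac13\ord_2(t)-1$ for $t\in\{2,4\}$, gives the asserted bounds $0$, $-\tfrac23$, $-\tfrac13$. I expect the only delicate point — and the one I would flag as the main obstacle — to be verifying that the $c\mapsto-c$ cancellation survives the character twist, i.e. that replacing $V$ by the coset $c'V$ disturbs neither the involution nor the equality $\left(\frac{t}{\mf{b}_c}\right)_3=\left(\frac{t}{\mf{b}_{-c}}\right)_3$; but this is exactly the bookkeeping already performed in deducing Corollary \ref{Chi} from Theorem \ref{thmPhi}, so no new idea is required.
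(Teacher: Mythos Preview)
Your proposal is correct and is exactly the approach the paper takes: the paper's entire proof of Corollary \ref{Chi2} is the single clause ``the arguments for Corollary \ref{Chi} give'', and you have simply written out what that means, namely replacing $V$ by the coset $V^{(\chi)}=c'V$ (or $\emptyset$), observing that the involution $c\mapsto -c$ still acts freely on $V^{(\chi)}$, and rerunning the $2$-adic estimate of Theorem \ref{thmPhi2} verbatim. Your bookkeeping of the valuations in the final paragraph is correct.
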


We are now ready to prove:

\begin{thm}\label{integer}For any cube-free integer $N>2$ prime to $3$, we have
$$\ord_2\left(L^{(\mathrm{alg})}(C_{N}, 1)\right)\geq 0.$$
In particular, $L^{(\mathrm{alg})}(C_{N}, 1)$ is an integer. 
\end{thm}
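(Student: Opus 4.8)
The plan is to mirror the $3$-adic induction from the proof of Theorem \ref{main1}, but now with the $2$-adic estimates of Theorem \ref{thmPhi2} and Corollary \ref{Chi2} in place of the $3$-adic ones. As before, write $N=tD$ with $t\in\{1,2,4\}$, $2\nmid D$, $k(D)=n$, and argue by induction on $n$. The base case $n=0$ is handled by the explicit values $L^{(\mathrm{alg})}(C_1,1)=\tfrac13$, $L^{(\mathrm{alg})}(C_2,1)=\tfrac12$, $L^{(\mathrm{alg})}(C_4,1)=1$, which have non-negative $2$-adic valuation exactly in the cases $t\neq 1$ where we only want $\ord_2\geq 0$ for $N>2$ (and $\ord_2(\tfrac12)=-1$ is harmless since then $N=2$ is excluded). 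For the inductive step, decompose $\Phi_{tD}$ as the sum of the fully imprimitive term $L_S(\ovl\psi_t,1)/\Omega_t$, the intermediate imprimitive terms $L_S(\ovl\psi_{tD_\beta},1)/\Omega_t$ with $k(D_\beta)<n$, and the primitive terms $L(\ovl\psi_{tD_\alpha},1)/\Omega_t$ with $k(D_\alpha)=n$, exactly as in the proof of Theorem \ref{main1}.

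The key point is that passing from a primitive Hecke $L$-value to an imprimitive one multiplies by Euler factors at the primes $p\mid (D_\alpha/D_\beta)$, and these Euler factors are $2$-adic units: for an inert prime the factor is $(p^2+p)/p=p+1$ with $p$ odd, and for a split prime $p\o=(\pi)(\ovl\pi)$ the factor is $\bigl(\tfrac{\pi-(t/\pi)_3}{\pi}\bigr)\bigl(\tfrac{\ovl\pi-(t/\ovl\pi)_3}{\ovl\pi}\bigr)$, whose numerator is $\pi\ovl\pi$ up to terms divisible by $(t/\pi)_3$, hence has the same $2$-adic valuation as $N\pi=p$, an odd number. So the imprimitive-to-primitive correction never changes the $2$-adic valuation, and by the induction hypothesis each intermediate term, and the fully imprimitive term, already has $2$-adic valuation $\geq \ord_2(\Phi_{tD})$ up to the uniform shift recorded in Theorem \ref{thmPhi2} (namely $\geq 0$, $\geq -\tfrac23$, $\geq -\tfrac13$ for $t=1,2,4$). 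Subtracting these from the bound for $\Phi_{tD}$ itself gives the same lower bound for $\sum_\alpha L(\ovl\psi_{tD_\alpha},1)/\Omega_t$.

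To descend from the sum over all $\alpha$ with $k(D_\alpha)=n$ to an individual summand, I would repeat verbatim the twisting trick from the proof of Theorem \ref{main1}: introduce the character twists $\Phi_{tD}^{(\chi)}$, use that for a character $\chi$ sending $\delta_i\mapsto\omega$ the combination $\Phi_{tD}^{(\chi)}-\omega^2\Phi_{tD}$ isolates the sum $\sum_{\alpha_i=e_i}L(\ovl\psi_{tD_\alpha},1)/\Omega_t$ up to the factor $(\omega-\omega^2)$, and then run the inner induction on the size of the index set $I$ using the $A+B+\Sigma_I$ / $A+B+2\Sigma_I$ identities. The only subtlety here is that $\omega-\omega^2=\sqrt{-3}$ is a $2$-adic unit, so unlike in the $3$-adic case there is \emph{no} loss of $\tfrac12$ at each step: $\ord_2\bigl(\sum_{\alpha_i=e_i}L(\ovl\psi_{tD_\alpha},1)/\Omega_t\bigr)\geq\ord_2(\Phi_{tD}^{(\chi)})$ and $\geq\ord_2(\Phi_{tD})$ directly, and Corollary \ref{Chi2} gives the same uniform bounds for the twists as Theorem \ref{thmPhi2} does for $\Phi_{tD}$. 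Applying this with $I=\{1,\dots,n\}$ yields, for each $\alpha$ with $k(D_\alpha)=n$,
$$\ord_2\!\left(\frac{L(\ovl\psi_{tD_\alpha},1)}{\Omega_t}\right)\ \geq\ \begin{cases}0&t=1\\ -\tfrac23&t=2\\ -\tfrac13&t=4.\end{cases}$$
Finally, since $L^{(\mathrm{alg})}(C_N,1)\in\Q$ (Birch--Swinnerton-Dyer, or \cite{stephens}), its $2$-adic valuation is an integer, so a lower bound of $-\tfrac23$ or $-\tfrac13$ forces $\ord_2\bigl(L^{(\mathrm{alg})}(C_N,1)\bigr)\geq 0$. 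Combined with Theorem \ref{m1} (which controls the $3$-part) and the $p$-part of BSD for $p>3$ due to Rubin, this shows $L^{(\mathrm{alg})}(C_N,1)\in\Z$.

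The main obstacle I anticipate is purely bookkeeping: making sure the uniform $2$-adic shifts ($0$, $-\tfrac23$, $-\tfrac13$) survive unchanged through both layers of induction, which works precisely because every operation involved — multiplying by Euler factors at odd primes, dividing by $\omega-\omega^2=\sqrt{-3}$ — is a $2$-adic unit operation, so there is no analogue of the $\tfrac12$ loss that appeared $3$-adically. One must also check that the base-case value $L^{(\mathrm{alg})}(C_2,1)=\tfrac12$ does not cause trouble: it only feeds into the induction as an \emph{intermediate} imprimitive term with $t=2$, where the allowed bound is $-\tfrac23<-1$, so $\ord_2(\tfrac12)=-1$ is compatible, and the conclusion $L^{(\mathrm{alg})}(C_N,1)\in\Z$ is only claimed for $N>2$.
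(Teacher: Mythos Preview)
Your overall strategy matches the paper's, but there is a genuine gap in the handling of $t=2$. In your final paragraph you assert that ``the allowed bound is $-\tfrac23<-1$''; this inequality is backwards. Since $\ord_2\bigl(L^{(\mathrm{alg})}(C_2,1)\bigr)=\ord_2(\tfrac12)=-1$ is \emph{strictly smaller} than the bound $-\tfrac23$ from Theorem~\ref{thmPhi2}, the value $\tfrac12$ does \emph{not} sit under the uniform bound, and the induction cannot start at $n=0$ without further input.

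What actually rescues the argument is precisely the failure of your other claim, that the Euler factors at primes $p\mid D$ are $2$-adic units. They are not. For an inert prime the factor is $(p^2+p)/p^2=(p+1)/p$ with $p$ odd, so $\ord_2\geq 1$; your own computation ``$p+1$ with $p$ odd'' already says this. For a split prime $p=\pi\ovl\pi$ your claim that the numerator ``has the same $2$-adic valuation as $\mathrm{N}\pi=p$'' is false when $t=2$: cubic reciprocity gives $\left(\tfrac{2}{\pi}\right)_3=\left(\tfrac{\pi}{2}\right)_3\equiv\pi^{(4-1)/3}=\pi\pmod{2\o}$, hence $\ord_2\bigl(\pi-\left(\tfrac{2}{\pi}\right)_3\bigr)\geq 1$ and the Euler factor has $\ord_2\geq 2$. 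This extra $2$-divisibility is exactly what compensates for the $-1$ coming from $L^{(\mathrm{alg})}(C_2,1)=\tfrac12$ in the fully imprimitive term $L_S(\ovl\psi_2,1)/\Omega_2$, and it is the one nontrivial $2$-adic ingredient in the whole proof. The paper isolates this observation by taking $n=1$ as the base case and analysing $\Phi_{2p}^{(\chi)}-\omega^2\Phi_{2p}$ directly; your scheme with base $n=0$ can be made to work too, but only after you replace ``Euler factors are $2$-adic units'' by the correct statement that for $t=2$ each contributes $\ord_2\geq 1$, and invoke the cubic reciprocity step above for the split primes.
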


\begin{proof}
 We write $N=tD$ where $2\nmid D$. We can again prove this by induction on the number of distinct prime factors $k(D)$ of $D$. First let $k(D)=1$, so that $D=p$ or $p^2$ for a prime $p$. Let $\chi$ be a character of $\Z/3\Z$ taking $1$ to $\omega$. Then

$$\Phi^{(\chi)}_{tp}-\omega^2 \Phi_{tp}=(1-\omega)\frac{L_S(\ovl{\psi}_{t},1)}{\Omega_t}+(\omega-\omega^2)\frac{L(\ovl{\psi}_{tp},1)}{\Omega_t},$$
where 
$$\frac{L_S(\ovl{\psi}_{t},1)}{\Omega_t}= \left\{ \begin{array}{ll}
\left(\frac{p^2+p}{p^2}\right)\frac{L(\overline{\psi}_{t},1)}{\Omega_t} & \mbox{if $p$ is inert in $K$}\\
\left(\frac{\pi-\left(\frac{t}{\pi}\right)_3}{\pi}\right)\left(\frac{\ovl{\pi}-\left(\frac{t}{\ovl{\pi}}\right)_3}{\ovl{\pi}}\right)\frac{L(\ovl{\psi}_{t},1)}{\Omega_t} & \mbox{if $p\o=(\pi)(\ovl{\pi})$ splits in $K$.}\\
       \end{array} \right.
$$
Recall that  $\frac{L(\overline{\psi}_{1},1)}{\Omega_2}=L^{(\mathrm{alg})}(C_1,1)=\frac{1}{3}$, $\frac{L(\overline{\psi}_{2},1)}{\Omega_2}=L^{(\mathrm{alg})}(C_2,1)=\frac{1}{2}$ and $\frac{L(\overline{\psi}_{4},1)}{\Omega_4}=L^{(\mathrm{alg})}(C_4,1)=1$. Note that in the case when $t=2$ and $p$ is split, we have $\left(\frac{2}{\pi}\right)_3= \left(\frac{\pi}{2}\right)_3\equiv \pi^{\frac{4-1}{3}} \equiv \pi  \bmod 2$, where in the first equality we used the fact that $2$ and $\pi$ are congruent to $\pm1 \bmod 3\o$ and the cubic reciprocity law. It follows that $\ord_2(\pi-\left(\frac{2}{\pi}\right)_3)=\ord_2(\ovl{\pi}-\left(\frac{2}{\ovl{\pi}}\right)_3)\geq 1$. Furthermore, $\ord_2(1-\omega)=0$. Thus, in all cases we have $\ord_2\left(\frac{L(\ovl{\psi}_{tp},1)}{\Omega_t}\right)\geq \ord_2(\Phi_{tp})$. Since $L^{(\mathrm{alg})}(C_{tp},1) \in \Q$, its $2$-adic valuation must be an integer. Thus combining with Theorem \ref{thmPhi2}, we see that $\ord_2\left(L^{(\mathrm{alg})}(C_{tp},1)\right)\geq 0$, as required. The same holds for $L^{(\mathrm{alg})}(C_{tp^2},1)$. The rest of the induction argument follows easily from the proof of Theorem \ref{main1}.
\end{proof}

\begin{remark} Once again, in the case when $k(D)=1$, $t\in \{2,4\}$ and $D\equiv 2,4,5,7\bmod 9$, the proof can be shortened significantly, since by a root number consideration (see Corollary \ref{root}), one of the two primitive Hecke $L$-values vanishes in the sum $\Phi_{tp}$.
\end{remark}

\begin{remark} Lemma \ref{tam} shows that the product of Tamagawa numbers of $C_N$ satisfies
\begin{align*}\ord_2\left(\prod_{q \mathrm{ bad}}c_q\right) = \left\{ \begin{array}{ll}
1&\mbox{if $N\equiv 2,7 \bmod 9$} \\
 0&\mbox{otherwise.}
       \end{array} \right.
 \end{align*}
Assume $L^{(\mathrm{\mathrm{alg}})}(C_{N},1)\neq 0$. Since we know by Cassels' theorem \cite{cassels62} that the order of $\Sha(C_N)$ is a perfect square when it is finite, Conjecture \ref{pBSD} predicts that $\ord_2\left(L^{(\mathrm{\mathrm{alg}})}(C_{N},1)\right)$ is odd whenever $N\equiv 2,7\bmod 9$. Note that by Corollary \ref{corro}, we know that $\ord_2(\mathcal{S}_N)=\ord_2\left(\frac{L^{(\mathrm{\mathrm{alg}})}(C_{N},1)}{\prod_{q \mathrm{ bad}}c_q}\right)$ is even when $N$ is a product of split primes in $K$, but in this case we have $\mathcal{S}_N\neq 0$ only if $N\equiv 1\bmod 9$.
\end{remark}

\newpage

\begin{appendix}

\section{Tamagawa number and root number computations}\label{appendixA}

We will compute the Tamagawa numbers and the root number of the curve
\begin{align*}C_{N}&: x^3+y^3=N,
\end{align*}
for a cube-free integer $N>1$ with $(N,3)=1$. We will first compute the Tamagawa numbers $c_q$ at the primes $q$ of bad reduction for $C_{N}$, so that $q\mid 3N$. The following result is known from \cite{zk87}, but it is given here for the convenience of the reader.

\begin{lem}\label{tam} Let $N>1$ be a cube-free integer prime to $3$. The Tamagawa numbers of $C_{N}$  are given by $c_2=1$,
$$
c_3=
 \left\{ \begin{array}{ll}
1 & \mbox{for $C_{N}$ when $N \equiv 4,5 \bmod 9$}\\
2 & \mbox{for $C_{N}$ when $N\equiv 2,7 \bmod 9$}\\
3 &\mbox{for $C_{N}$ when $N \equiv 1, 8 \bmod 9$}\\
       \end{array} \right.
$$
and for an odd prime $p$ dividing $N$,
$$
c_p=
 \left\{ \begin{array}{ll}
1 &\mbox{if $p\equiv 2\bmod 3$}\\
3 & \mbox{if $p\equiv 1 \bmod 3$. }\\
       \end{array} \right.
$$

\end{lem}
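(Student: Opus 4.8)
The plan is to compute each $c_q$ directly by Tate's algorithm, after passing to a convenient minimal Weierstrass model. The usual transformation identifies $C_N$ over $\Q$ with
\[
E_N\colon y^2=x^3-2^43^3N^2,\qquad \Delta_{E_N}=-2^{12}3^9N^4,\quad j=0,
\]
so the bad primes are $2$, $3$ and the primes $p\mid N$, and since $v_3(\Delta_{E_N})=9<12$ and $v_p(\Delta_{E_N})=4\,v_p(N)<12$ for $p\mid N$, the model $E_N$ is already minimal at every odd bad prime. For an odd prime $p\mid N$ (so $p\geq 5$, as $(N,3)=1$), Tate's algorithm gives additive reduction of Kodaira type $\mathrm{IV}$ when $v_p(N)=1$ and of type $\mathrm{IV}^*$ when $v_p(N)=2$, with component group $\Z/3\Z$ in both cases; hence $c_p\in\{1,3\}$, and $c_p=3$ exactly when the quadratic governing the component group at the relevant step of the algorithm splits over $\mb{F}_p$. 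Since $a_3=0$, this quadratic has the form $Y^2+2^43^3z^2$ for some $z\in\mb{F}_p^\times$, which splits iff $-2^43^3$, equivalently $-3$, is a square modulo $p$; by quadratic reciprocity this holds iff $p\equiv 1\bmod 3$, giving the stated value of $c_p$.

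For $p=2$: if $N$ is odd, the projective model $X^3+Y^3=NZ^3$ reduces modulo $2$ to the Fermat cubic $X^3+Y^3+Z^3=0$ over $\mb{F}_2$, which is smooth (its partial derivatives $X^2,Y^2,Z^2$ have no common zero in $\mb{P}^2$), so $C_N$ has good reduction at $2$ and $c_2=1$. If $N$ is even, write $N=tD$ with $t\in\{2,4\}$ and $D$ odd, and use the $2$-minimal model $y^2=x^3-3^3D^2$ (for $t=2$) or $y^2=x^3-2^23^3D^2$ (for $t=4$); Tate's algorithm then gives type $\mathrm{IV}$, resp.\ $\mathrm{IV}^*$, and in each case the component-group quadratic reduces to $Y^2+Y+1$ over $\mb{F}_2$, which is irreducible, so Frobenius acts nontrivially on $\Z/3\Z$ and $c_2=1$.

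For $p=3$, I would first reduce to $N\in\{1,2,4\}$. Because $(\Z_3^\times)^3=\{x\in\Z_3^\times: x\equiv\pm1\bmod 9\}$, the scaling $(x,y)\mapsto((M/N)^{1/3}x,(M/N)^{1/3}y)$ exhibits $C_N$ as isomorphic over $\Q_3$ to $C_1$, $C_2$ or $C_4$ according as $N\equiv\pm1$, $\pm2$ or $\pm4\bmod 9$, and $c_3$ is a local invariant. One then runs Tate's algorithm at $3$ on the three minimal models $y^2+y=x^3-7$, $y^2=x^3-27$ and $y^2=x^3-108$ (all with $v_3(\Delta)=9$): the first has reduction type $\mathrm{IV}^*$ with $c_3=3$, while the other two have type $\mathrm{I}_0^*$ with $c_3=2$ and $c_3=1$ respectively, the distinction lying in whether the Frobenius permutation of the four non-identity components of the special fibre is a double transposition or involves a $3$-cycle. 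Assembling the three cases and matching residues modulo $9$ gives the stated $c_3$; this also records that $c_2=1$ always and that an odd $p\mid N$ contributes $3$ precisely when it splits in $K=\Q(\sqrt{-3})$.

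The substantive difficulty is the case $p=3$: since $3$ divides the ramification degree of the field over which $C_N$ acquires good reduction, there is wild ramification, and the convenient ``distinct roots modulo $p$'' shortcuts in Tate's algorithm are not available (in residue characteristic $3$ a depressed cubic is always a perfect cube), so one must carry out the successive coordinate changes explicitly and keep careful track of the $3$-adic valuations of the resulting coefficients — and it is exactly this bookkeeping that produces the trichotomy according to $N\bmod 9$. By contrast, the computations at $2$ and at the odd primes dividing $N$ are routine once a minimal model has been fixed.
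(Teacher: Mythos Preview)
Your computations of $c_p$ for odd $p\mid N$ and of $c_2$ are correct and run along the same lines as the paper's proof (the paper works directly with the model $y^2=x^3-2^43^3N^2$ throughout, but the content is the same).

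For $c_3$, your idea of reducing to $N\in\{1,2,4\}$ via the identification $(\Z_3^\times)^3=\{u:u\equiv\pm1\bmod 9\}$ is a nice shortcut, but the Kodaira types you assert for $C_2$ and $C_4$ are wrong. For $y^2=x^3-27$ (your model for $C_2$), Step~7 of Tate's algorithm gives $P(T)=T^3-1=(T-1)^3$ over $\mb{F}_3$, a \emph{triple} root, so the type is not $\mathrm{I}_0^*$; after the translation $x\mapsto x+3$ one finds $a_6=0$ and $a_4=27$ with $v_3(a_4)=3<4$, which is type $\mathrm{III}^*$ with $c_3=2$. Likewise for $y^2=x^3-108$ (your model for $C_4$), $P(T)=T^3-4\equiv(T-1)^3$ again has a triple root; after translating one gets $a_6=-81$ and the Step~8 quadratic $Y^2+1$ over $\mb{F}_3$, which has nonzero discriminant but no $\mb{F}_3$-roots, giving type $\mathrm{IV}^*$ with $c_3=1$. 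So your final Tamagawa numbers happen to be right, but the mechanism you describe (Frobenius permuting ``four non-identity components'' of an $\mathrm{I}_0^*$ fibre) does not apply here; indeed $\mathrm{I}_0^*$ never occurs for these curves at $3$, precisely because in residue characteristic~$3$ the cubic $P(T)$ is automatically a perfect cube --- the very phenomenon you flagged as the difficulty.

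The paper avoids this pitfall by running Tate's algorithm once on $y^2=x^3-2^43^3N^2$ for general $N$ (making the translation $x\mapsto x-6$ to centre the triple root) and then reading off the trichotomy from $1+2N^2\bmod 9$; this leads to types $\mathrm{IV}^*$, $\mathrm{III}^*$, $\mathrm{IV}^*$ for $N\equiv\{1,8\},\{2,7\},\{4,5\}\bmod 9$ respectively.
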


\begin{proof}
We follow Tate's algorithm \cite{tate} and use the usual notation. We will first work with the model $y^2=x^3-2^4 3^3 N^{2}$ for $C_{N}$.
\begin{itemize}
\item $\underline{\text{Tamagawa factor } c_p}$: The type is IV if $p^2\nmid N$, IV$^*$ if $p^2\mid N$. If $p\equiv 2\bmod 3$, then $\left(\frac{-3}{p}\right)=-1$, and the equation $T^2+3^3$ has no roots in $\mathbb{F}_p$. It follows that $c_p=1$. If $p\equiv 1\bmod 3$, then $\left(\frac{-3}{p}\right)=1$, and the equation $T^2+3^3$ has roots in $\mathbb{F}_p$, thus $c_p=3$. 
\item $\underline{\text{Tamagawa factor } c_3}$: We have $3^3\mid a_6$ and $P(T)=T^3-2^4 N^{2}$ has triple roots in $\mathbb{F}_3$ since $P'(T)=3 T^2\equiv 0 \bmod 3$. After the change of variables $x=X-6$ and $y=Y$, the triple root is equal to $0$. The new equation is $Y^2=X^3-18X^2+108X-2^3 3^3(1+2N^{2})$. The equation $T^2-\frac{-2^3 3^3(1+2N^{2})}{3^4}\equiv T^2+1 \bmod 3$ when $N\equiv 4,5\bmod 9$, and this has no roots in $\mathbb{F}_3$, thus $c_3=1$. The equation is congruent to $T^2+2 \bmod 3$ when $N\equiv 1,8\bmod 9$, and has distinct roots in $\mathbb{F}_3$. Hence the type is IV$^*$ and $c_3=3$. This has a double root when $N\equiv 2,7\bmod 9$, but $108/3^3\not\equiv 0\bmod 3$, thus the type is III$^{*}$ and $c_3=2$ in this case.\
\item $\underline{\text{Tamagawa factor } c_2}$:  If $2\nmid N$, then $C_N$ has good reduction at $2$, so clearly $c_2=1$. Now assume $N=2^iD$ where $i=1$ or $2$ and $2\nmid D$. We will deal with the cases $i=1$ and $i=2$ separately.\\
(Case $i=1$) Making the change of variables $x=X$ and $y=Y+3D$, we obtain the equation $Y^2+6DY=X^3-2^2 3^2 D^{2}$. Then $a_3=6D$, $a_6=-2^2 3^2 D^{2}$ and $2^3\nmid b_6=(6D)^2+4a_6$, hence the type is IV. Now, $T^2+3DT+3^2 D^{2}\equiv T^2+T+1$ has no roots in $\mathbb{F}_2$, so $c_2=1$.\\
(Case $i=2$) Making the change of variables $x=X$ and $y=Y+2$,                                                                                                                                                                                                                                                                                                                                                                                                                                                                                                                                                                                                                                                                                                                                                                                                                                                                                                                                                                                                                we obtain the equation $Y^2+4Y=X^3-2^2(3^2D^{2}+1)$. Now, the equation $T^2+\frac{4}{2^2}T+\frac{2^2(3^2D^{2}+1)}{2^4}\equiv T^2+T+1\bmod 2$ has no root in $\mathbb{F}_2$. Thus the type is IV$^*$ and $c_2=1$.
\end{itemize}
\end{proof}
Now, we compute the root number of the curve $C_{N}$. The global root number $\epsilon(C_N/\Q)$ of $C_N$ is given by
\begin{align*}\epsilon(C_N/\Q) &=\prod_{q\leq \infty}\epsilon_q(C_N/\Q)\\
&=-\prod_{q\mid 3N}\epsilon_q(C_N/\Q),
\end{align*}
where $\epsilon_q(C_N/\Q)$ denotes the local root number at a prime $q$, and we always have $\epsilon_\infty(C_N/\Q)=-1$. We write $N=tD$ with $t\in \{1,2,4\}$ and $(2,D)=1$, and let $E_{tD}$ be as given by \eqref{eqE}.

\begin{prop}\label{lroot} The local root numbers of $C_N$ are given by
$$\epsilon_\infty(C_{N}/\Q)=-1,$$
$$\epsilon_2(C_{N}/\Q)= \left\{ \begin{array}{ll}
 -1 &\mbox{if $2\mid N$} \\
 +1 &\mbox{otherwise},
       \end{array} \right.$$
$$\epsilon_3(C_{N}/\Q)= \left\{ \begin{array}{ll}
 -1 &\mbox{if $N\equiv 1,8 \bmod 9$} \\
 +1 &\mbox{otherwise},
       \end{array} \right.$$
and for any odd prime factor $p$ of $N$,
$$\epsilon_{p}(C_{N}/\Q)= \left(\frac{-3}{p}\right).$$
\end{prop}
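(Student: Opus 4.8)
The plan is to compute these local root numbers in the same spirit as the Tamagawa numbers of Lemma \ref{tam}: first determine the reduction type of $C_N$ at each bad prime by Tate's algorithm (much of which is already carried out in the proof of Lemma \ref{tam}), and then feed the outcome into the known formulae for local root numbers. The value $\epsilon_\infty(C_N/\Q)=-1$ is simply the archimedean root number of an elliptic curve over $\R$, and at every prime $q\nmid 3N$ the curve $C_N$ has good reduction, so $\epsilon_q(C_N/\Q)=+1$; the same holds at $q=2$ when $2\nmid N$, by the good reduction recorded in Lemma \ref{tam}. Hence it remains only to treat the primes dividing $3N$ (and $q=2$ when $2\mid N$), and the global root number is then recovered from $\epsilon(C_N/\Q)=-\prod_{q\mid 3N}\epsilon_q(C_N/\Q)$.

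For an odd prime $p\mid N$ with $p\geq 5$, Tate's algorithm shows that $C_N$ (which has $j=0$, hence potentially good reduction) has additive reduction at $p$ of Kodaira type $\mathrm{IV}$ if $p^{2}\nmid N$ and type $\mathrm{IV}^{*}$ if $p^{2}\mid N$; in either case the image of inertia acting on the Tate module $T_\ell C_N$ (for $\ell\neq p$) has order $e=3$. Rohrlich's formula for the local root number of an elliptic curve with potentially good reduction at a prime $\geq 5$ then gives, in the case $e=3$, that $\epsilon_p(C_N/\Q)=\left(\frac{-3}{p}\right)$, which is the asserted value.

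It remains to handle $p=3$ (always) and $p=2$ (when $2\mid N$), where Rohrlich's formula is unavailable and, at $p=3$, the reduction is in general wildly ramified. Here I would appeal to the explicit tables of Halberstadt for the local root number at $2$ and at $3$. The minimal Weierstrass equations produced in the proof of Lemma \ref{tam} already split into the relevant congruence classes — $N$ modulo $9$ at $p=3$, and $N=2D$ versus $N=4D$ at $p=2$ — and the Kodaira types obtained there (type $\mathrm{IV}^{*}$ or $\mathrm{III}^{*}$ at $3$, type $\mathrm{IV}$ or $\mathrm{IV}^{*}$ at $2$) determine the data needed to apply Halberstadt's classification; reading off the corresponding entry yields $\epsilon_2(C_N/\Q)=-1$ whenever $2\mid N$, and $\epsilon_3(C_N/\Q)=-1$ exactly when $N\equiv 1,8\bmod 9$ and $+1$ otherwise.

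The main obstacle is precisely this last step at the primes $2$ and $3$: the local root number there is not pinned down by the Kodaira type alone, the reduction at $3$ can carry wild ramification, and one must keep careful track of enough coefficients of a minimal model to apply Halberstadt's tables without error. An attractive alternative, more closely adapted to the complex multiplication, would be to compute each $\epsilon_q(C_N/\Q)$ through the Grossencharacter $\psi_N$ of $C_N$ over $K$, whose values are written down explicitly above ($\psi_t(\mf{a})=\ovl{\left(\frac{t}{\mf{a}}\right)}_3\,a$): the local root numbers of $\psi_N$ at the ramified primes of $K$ are then normalized Gauss sums — cubic Gauss sums at the primes dividing $\mathrm{rad}(tD)$, and a Gauss sum modulo $(\sqrt{-3})^{2}$ at the prime of $K$ above $3$ — which can be evaluated directly, giving the stated signs and, upon multiplying, the identity $\epsilon(C_N/\Q)=-(-1)^{t(N)}(-1)^{r(N)}$ recorded in the introduction.
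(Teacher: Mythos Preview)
Your proposal is correct and follows essentially the same route as the paper. For odd primes $p\geq 5$ dividing $N$ you both invoke Rohrlich's formula for curves with potentially good reduction, and for the delicate primes $2$ and $3$ you both appeal to pre-computed tables; the only difference is that the paper cites V\'arilly-Alvarado's Lemma~4.1 (which packages the root number of $y^2=x^3+c$ directly in terms of congruences on $c$), whereas you propose going back to Halberstadt's tables. Your alternative suggestion via the Grossencharacter and cubic Gauss sums is not pursued in the paper.
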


\begin{proof}
The local root numbers at $2$ and $3$ can be obtained from \cite[Lemma 4.1]{VA}. We have $\epsilon_2(E_{t D}/\Q) = -1$ if $2\mid t$ or $t=1$ and $D^2\equiv 3\bmod 4$. But the latter case does not happen. The claim on the local root number at $3$ follows on noting that $\epsilon_3(E_D/\Q) = -1$ if and only if $-2^4D^2\equiv 2$ or $4\bmod 9$, $\epsilon_3(E_{2D}/\Q) = -1$ if and only if $-D^2\equiv 2$ or $4\bmod 9$ and $\epsilon_3(E_{4D}/\Q) = -1$ if and only if $-2^2 D^2\equiv 2$ or $4\bmod 9$. Since the discriminants of these curves have $p$-adic valuation equal to $4$ or $8$, we know by a result of Rohrlich \cite[Proposition 2]{roh} that $\epsilon_p(E_{tD}/\Q)=\left(\frac{-3}{p}\right)$. 
\end{proof}

\begin{cor}\label{root}Let $D_\alpha =p_1^{\alpha_1}\cdots p_n^{\alpha_n}\equiv 2,4,5,7\bmod 9$ be a cube-free product of primes $p_1,\ldots p_n$, where we may consider $\alpha=(\alpha_1,\ldots \alpha_n)\in (\Z/3\Z)^n$. Then for $t\in \{2,4\}$, we have
$$\epsilon(E_{t D_\alpha}/\Q)=-\epsilon(E_{t D_{2\alpha}}/\Q).$$
In particular, we have at least one of $L(E_{t D_\alpha}/\Q,s)$ and $L(E_{t D_{2\alpha}}/\Q,s)$ vanishing at $s=1$.
\end{cor}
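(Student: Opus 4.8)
The plan is to compare the two global root numbers place by place via Proposition~\ref{lroot}, showing that they agree at every place except $3$, where they differ by a sign. First I would observe that $E_{tD_\alpha}$ and $E_{tD_{2\alpha}}$ have the same set of bad primes: reducing the exponents $\alpha_i$ modulo $3$ does not change which primes occur, so $\mathrm{rad}(D_\alpha)=\mathrm{rad}(D_{2\alpha})=:D$, and both curves (which, being birationally equivalent to the corresponding $C_N$, have the same $L$-function and root number as it) have bad reduction exactly at the primes dividing $3tD$. By Proposition~\ref{lroot} the local root numbers at $\infty$ (always $-1$), at $2$ (equal to $-1$ since $2\mid t$), and at each odd prime $p\mid D$ (equal to $\left(\frac{-3}{p}\right)$, which depends only on $p$) coincide for the two curves. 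Hence in the product formula $\epsilon(E_{tD_\alpha}/\Q)=\prod_{q\leq\infty}\epsilon_q(E_{tD_\alpha}/\Q)$ only the factor at $q=3$ can differ, and it suffices to prove $\epsilon_3(E_{tD_\alpha}/\Q)=-\epsilon_3(E_{tD_{2\alpha}}/\Q)$.

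For this I would work in $(\Z/9\Z)^\times$, cyclic of order $6$, whose subgroup of cubes is $\{1,8\}$; thus for $N$ prime to $3$ the condition $N\equiv 1,8\bmod 9$ appearing in Proposition~\ref{lroot} is exactly the condition that $N$ be a cube modulo $9$. Fixing a generator and writing $\ell(m)\in\Z/6\Z$ for the discrete logarithm of $m\bmod 9$, this reads $\epsilon_3(E_N/\Q)=-1\iff 3\mid\ell(N)$. Two elementary facts then feed in: (i) since $2\alpha_i$ and $2\alpha_i\bmod 3$ differ by a multiple of $3$, one has $D_\alpha^2=D_{2\alpha}c^3$ for some integer $c$ prime to $3$, whence $\ell(D_{2\alpha})\equiv 2\ell(D_\alpha)\bmod 3$; and (ii) $\ell(2)\equiv 1$, $\ell(4)\equiv 2\bmod 3$, so $3\nmid\ell(t)$ for $t\in\{2,4\}$, while the hypothesis $D_\alpha\equiv 2,4,5,7\bmod 9$ gives $3\nmid\ell(D_\alpha)$.

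It follows that $\epsilon_3(E_{tD_\alpha}/\Q)=-1$ iff $3\mid\ell(t)+\ell(D_\alpha)$ and $\epsilon_3(E_{tD_{2\alpha}}/\Q)=-1$ iff $3\mid\ell(t)+2\ell(D_\alpha)$. These two conditions cannot both hold (subtracting would force $3\mid\ell(D_\alpha)$), and cannot both fail: since $3\nmid\ell(D_\alpha)$, the residues $\ell(t)$, $\ell(t)+\ell(D_\alpha)$, $\ell(t)+2\ell(D_\alpha)$ are pairwise distinct modulo $3$, so one of them is $\equiv 0$, and by (ii) it is not $\ell(t)$, hence it is one of the other two. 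Therefore exactly one of the conditions holds, the two local root numbers at $3$ have opposite signs, and $\epsilon(E_{tD_\alpha}/\Q)=-\epsilon(E_{tD_{2\alpha}}/\Q)$. For the final assertion, the global root number of an elliptic curve over $\Q$ is the sign in the functional equation of its $L$-function about $s=1$, so a root number of $-1$ forces odd — in particular positive — order of vanishing at $s=1$; since exactly one of the two root numbers equals $-1$, the corresponding $L$-function vanishes at $s=1$.

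The only step I expect to need genuine care is the mod-$9$ bookkeeping in the middle paragraph, in particular recording that passing from $D_\alpha$ to $D_{2\alpha}$ multiplies the class in $(\Z/9\Z)^\times$ by a perfect cube, so that $\ell$ changes by $2\ell(D_\alpha)$ only modulo $3$ (not modulo $6$) — together with checking the small-case values $\ell(2),\ell(4)$. Everything else is a direct application of Proposition~\ref{lroot}, the product formula for root numbers, and the functional equation.
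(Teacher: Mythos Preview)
Your proof is correct and follows the same approach as the paper: compare local root numbers via Proposition~\ref{lroot}, note they agree away from $3$, and at $3$ use that $D_{2\alpha}$ differs from $D_\alpha^2$ by a cube together with the structure of $(\Z/9\Z)^\times$ to force opposite signs. The paper compresses the entire mod-$9$ computation into the single observation ``$a^3\equiv\pm1\bmod 9$ for $a\in\{2,4,5,7\}$'' (equivalently, the cubes in $(\Z/9\Z)^\times$ are exactly $\{1,8\}$), which is precisely the content of your discrete-logarithm argument.
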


\begin{proof} This follows from Proposition \ref{lroot} on noting that $a^3\equiv \pm1 \bmod 9$ for $a\in \{2,4,5,7\}$.
\end{proof}

\section{Numerical examples}\label{appendixB}

Given a cube-free positive integer $N$ with $(N,3)=1$, $k(N)$ denotes the number of distinct prime factors of $N$. We have $k(N)=r(N)+s(N)$, where $r(N)$ and $s(N)$ denote the number of distinct inert and split prime factors of $N$, respectively. The following computations were obtained using Magma \cite{magma} and Sage \cite{sage}, assuming the generalised Riemann hypothesis whenever $k(N)>3$. We give three tables, one for each curve $C_N$ where $N$ is of the form $D$, $2D$ or $4D$ and $(D,2)=1$. The numerical data is ordered first by $k(N)$ (which affects the $3$-adic valuation of $L^{(\mathrm{alg})}(C_N,1)$ by Theorem \ref{main1}), then by $r(N)$ (which affects the order of $\Sha(C_N)[3]$ by Theorem \ref{3descent})  and finally by the order of the prime factors. 

\begin{equation}\nonumber
\begin{array}{lcclc}
N & (r(N),s(N)) &  N \bmod 9 &  L^{(\mathrm{\mathrm{alg}})}(C_{N},1)& \Sha(C_{N})[3]\\
\hline
19^2 & (0,1) & 1 & 3^2& \text{ trivial}\\  
2683^2& (0,1) &  1 & 3^2 &\text{ trivial}\\  
5 & (1,0) &  5 & 1& \text{ trivial}\\  
5^2 & (1,0) & 7 & 2& \text{ trivial}\\  
19^2\cdot 37^2 & (0,2) & 1 &  3^3& \text{ trivial}\\ 
7\cdot 53 & (1,1) & 2 & 2\cdot 3 &  \text{ trivial}\\  
17^2 \cdot 53 & (2,0) & 1 & 3^3   &  (\Z/3\Z)^2\\ 
17^2 \cdot 53^2 & (2,0) & 8 & 3^3   &  (\Z/3\Z)^2\\  
53^2 \cdot 71^2 & (2,0) & 1 & 3^3   &  (\Z/3\Z)^2\\ 
7^2 \cdot 139^2\cdot 379^2 & (0,3) & 1 &2^6 \cdot 3^4 &  \text{ trivial}\\   
7 \cdot 139 \cdot 397 & (0,3)& 1&3^4 &  \text{ trivial}\\   
7^2 \cdot 139^2 \cdot 397^2 & (0,3)&1& 3^4\cdot 7^2  &  \text{ trivial}\\   
5^2\cdot 7^2 \cdot 13^2 & (1,2)& 7 & 2\cdot 3^2 & \text{ trivial}\\ 
11^2\cdot 29 \cdot 47 & (3,0)& 7 & 2^5 \cdot 3^2 & (\Z/3\Z)^2\\ 
7\cdot 17 \cdot 31 \cdot 53 & (2,2)& 1 & 3^3 &  \text{ trivial}\\ 
5\cdot 11\cdot 17\cdot 53 & (4,0) & 1 & 3^3 &  (\Z/3\Z)^2\\ 
17\cdot 53\cdot 89\cdot 179 & (4,0) & 1 & 3^5 &  (\Z/3\Z)^4\\ 
5^2 \cdot 7^2 \cdot 11^2 \cdot 13^2 \cdot 23 &(3,2) & 5 & 3^6 & (\Z/3\Z)^2\\ 
5^2 \cdot 7^2 \cdot 11^2 \cdot 13^2 \cdot 23^2 &(3,2) & 7 & 2^7\cdot 3^4  & (\Z/3\Z)^2\\ 
5\cdot 17\cdot 23\cdot 29\cdot 41 & (5,0) & 5 & 3^4 & (\Z/3\Z)^4\\
5^2 \cdot 7^2 \cdot 11^2 \cdot 13^2 \cdot 23^2 \cdot 29^2 &(4,2) & 1 & 3^5 & (\Z/3\Z)^2\\ 
\hline
\hline

2\cdot 7 & (1,1) & 5 & 3 & \text{ trivial}\\
2\cdot 7^2 \cdot 13^2 &(1,2) &2 &2 \cdot 3^2 &  \text{ trivial}\\
2\cdot 5^2 \cdot 13^2 &(2,1)&8& 2^2 \cdot 3^2 &  \text{ trivial}\\
2\cdot 5\cdot 109  &(2,1)&1  &3^2 &  \text{ trivial}\\
2\cdot 5\cdot 163&(2,1)&1&3^2&\text{ trivial}\\
2\cdot 5\cdot 181  &(2,1)&1  &3^2 &  \text{ trivial}\\
2\cdot 5 \cdot 1657 & (2,1) & 1& 3^4 &   (\Z/3\Z)^2\\
2\cdot 5^2 \cdot 11^2 &(3,0)&2 & 2\cdot 3^2 &  (\Z/3\Z)^2\\ 
2\cdot 5 \cdot 569& (3,0)&2&2\cdot 3^2& (\Z/3\Z)^2  \\
2\cdot 5 \cdot 569^2& (3,0)&4&2^2\cdot 3^2& (\Z/3\Z)^2\\
2\cdot 5^2 \cdot 569^2& (3,0)&2&2^3 \cdot 3^4&(\Z/3\Z)^2\\
2\cdot 17^2\cdot 23^2 & (3,0)& 5 &2^4\cdot 3^2 &  (\Z/3\Z)^2\\
2\cdot 7 \cdot 139 \cdot 379 & (1,3)&2&2\cdot 3^3 &  \text{ trivial}\\   
2\cdot 7^2 \cdot 139^2 \cdot 379^2 & (1,3)& 2&2\cdot 3^3 &  \text{ trivial}\\   
2\cdot 7\cdot 17 \cdot 23 &(3,1)&2 & 2\cdot 3^3 &  (\Z/3\Z)^2\\ 
2\cdot 23\cdot 29 \cdot 41 &(4,0)& 1 & 2^2\cdot 3^3&  (\Z/3\Z)^2\\
2\cdot 13\cdot 19 \cdot 37\cdot 43 & (1,4) & 2 & 2\cdot 3^4  &\text{ trivial}\\   
2\cdot 5^2 \cdot 7^2 \cdot 11^2 \cdot 13^2 &(3,2) &2 & 2\cdot 3^6 & (\Z/3\Z)^2\\
2\cdot 17\cdot 23 \cdot 29\cdot 53 & (5,0) & 2 &  2\cdot 3^4 & (\Z/3\Z)^4\\
2\cdot 5\cdot 11\cdot 23\cdot 29\cdot 41 & (6,0) & 1 &  3^5 & (\Z/3\Z)^4\\
2\cdot  5^2\cdot7^2\cdot11^2\cdot13^2\cdot17^2\cdot23^2  &(5,2) &5 &   3^8 & (\Z/3\Z)^4\\
\hline
\hline
2^2\cdot 7^2 & (1,1) & 7& 2\cdot 3 &  \text{ trivial}\\
2^2\cdot5^2  &(2,0)&1& 3  &  \text{ trivial}\\
2^2\cdot7 \cdot 13 &(1,2)&4 & 3^2 &  \text{ trivial}\\
2^2\cdot37 \cdot 73 &(1,2) & 4& 2^2\cdot 3^2 &\text{ trivial}\\
2^2\cdot5 \cdot 31 &(2,1)&8& 3^2 &  \text{ trivial}\\
2^2\cdot5^2 \cdot 11^2 &(3,0)&4& 3^4 & (\Z/3\Z)^2\\ 
\end{array}
\end{equation}

\begin{equation}\nonumber
\begin{array}{lcclc}
2^2\cdot5 \cdot 89^2 &(3,0)&2 & 2^5\cdot 3^2 & (\Z/3\Z)^2\\ 
2^2\cdot5^2\cdot 137^2  &(3,0)&4& 3^6  & (\Z/3\Z)^2\\
2^2\cdot47 \cdot 191 &(3,0) & 7 & 2\cdot 3^2 & (\Z/3\Z)^2\\ 
2^2\cdot7^2 \cdot 19^2 \cdot 37^2 & (1,3)& 7& 2^5 \cdot 3^3 & \text{ trivial}\\ 
2^2\cdot11 \cdot 31 \cdot 43 & (2,2)& 8& 3^3 & \text{ trivial}\\ 
2^2\cdot5^2 \cdot 13^2 \cdot 17^2 &(3,1) & 7 & 2^5\cdot 3^3  & (\Z/3\Z)^2\\ 
2^2\cdot5^2 \cdot 11 \cdot 23 &(4,0) & 1 & 3^3  & (\Z/3\Z)^2\\ 
2^2\cdot7 \cdot 13 \cdot 19 \cdot 31 & (1,4)& 7 &2\cdot 3^4 & \text{ trivial}\\ 
2^2\cdot7\cdot 11\cdot 17\cdot 53 & (3,2) & 4 & 3^4 &  (\Z/3\Z)^2\\ 
2^2\cdot5^2\cdot 7^2\cdot 11\cdot 17^2 & (4,1) & 8 & 3^4 &  (\Z/3\Z)^2\\ 
2^2\cdot5\cdot 11\cdot 17^2\cdot 41 & (5,0) & 2 & 2\cdot 3^4 &  (\Z/3\Z)^4\\ 
2^2\cdot5^2 \cdot 7^2 \cdot 11^2 \cdot 13^2 \cdot 23^2 &(4,2) & 1& 3^7  & (\Z/3\Z)^2\\ 
2^2\cdot5\cdot 11\cdot 17\cdot 23\cdot 41 & (6,0) & 8 & 3^5 & (\Z/3\Z)^4\\ 
2^2\cdot5\cdot 17\cdot 23\cdot 29\cdot 41 & (6,0) & 8 & 3^5 & (\Z/3\Z)^4\\ 
2^2\cdot5^2 \cdot 7^2 \cdot 11^2 \cdot 13^2 \cdot 17^2 \cdot 29^2 &(5,2) & 7& 2\cdot 3^8 & (\Z/3\Z)^4\\ 
 \hline
\end{array}
\end{equation}

\end{appendix}

\vspace{20pt}

\vspace{20pt}

\begin{center}
\begin{tabular}{@{}p{2.5in}p{}}
Yukako Kezuka \\
Max-Planck-Institut f\"{u}r Mathematik \\
Vivatsgasse 7\\
53111 Bonn\\
Germany\\
\emph{ykezuka@mpim-bonn.mpg.de}\\
\end{tabular}
\end{center}


\begin{thebibliography}{10}

\bibitem{bsd2}  B. J. Birch, H. P. F. Swinnerton-Dyer,  {\em Notes on elliptic curves. {II}}, J. Reine Angew. Math. 218 (1965), 79--108.

\bibitem{magma}W. Bosma, J. Cannon, and C. Playoust, {\em The Magma algebra system. I. The user language}, J. Symbolic Comput., 24 (1997), 235--265.

\bibitem{CST} L. Cai, J. Shu, Y. Tian,  {\em Cube sum problem and an explicit Gross-Zagier formula}, Amer. J. Math. 139 (2017), no. 3, 785--816.

\bibitem{cassels62} J. W. S. Cassels, {\em Arithmetic on curves of genus 1. IV. Proof of the Hauptvermutung},   J. Reine Angew. Math. 211 (1962), 95--112.

\bibitem{cassels} J. W. S. Cassels, {\em Arithmetic on curves of genus 1. VIII. On conjectures of Birch and Swinnerton-Dyer}, J. Reine Angew. Math. 217 (1965), 180--199.


\bibitem{coates1} J. Coates,  {\em Lectures on the Birch--Swinnerton-Dyer conjecture}, ICCM Not. 1 (2013), no. 2, 29--46.


\bibitem{gol-sch} C. Goldstein and N. Schappacher, {\em S\'eries d'{E}isenstein et fonctions {$L$} de courbes elliptiques \`a multiplication complexe}, J. Reine Angew. Math. 327 (1981) 184--218.


\bibitem{yuka}Y. Kezuka, {\em   On the $p$-part of the Birch--Swinnerton-Dyer conjecture for elliptic curves with complex multiplication by the ring of integers of $\Bbb Q(\sqrt{-3})$},  Math. Proc. Cambridge Philos. Soc. 164 (2018), no. 1, 67--98.

\bibitem{kl}Y. Kezuka, Y. Li, {\em  A classical family of elliptic curves having rank one and the $2$-primary part of their Tate--Shafarevich group non-trivial}, Doc. Math. 25 (2020), 2115--2147.


\bibitem{Lemm}F. Lemmermeyer, {\em   Reciprocity Laws. From Euler to Eisenstein},  Springer Monographs in Mathematics, Springer-Verlag, Berlin, 2000, xx+487,  ISBN: 3-540-66957-4

\bibitem{cl}C. Li, {\em   $2$-Selmer groups, $2$-class groups and rational points on elliptic curves},   Trans. Amer. Math. Soc. 371 (2019), no. 7, 4631--4653.


\bibitem{qiu}D. Qiu, Z. Xianke, {\em    Elliptic curves with CM by $\sqrt{-3}$ and $3$-adic valuations of their $L$-series},   Manuscripta Math. 108 (2002), no. 3, 385--397.


\bibitem{roh}D. E. Rohrlich, {\em  Variation of the root number in families
of elliptic curves},    Compos. Math., 87, no.2 (1993), 119--151.

\bibitem{rosu} E. Rosu, {\em Central values of $L$-functions of cubic twists},  to appear in Math. Ann.

\bibitem{rubin} K. Rubin, {\em The ``main conjectures'' of Iwasawa theory for imaginary quadratic fields},  Invent. Math. 103 (1991), no. 1, 25--68.

\bibitem{sage}SageMath, the Sage Mathematics Software System (Version 9.0),
   The Sage Developers, 2020, {\tt https://www.sagemath.org}.

\bibitem{satge}P. Satg\'{e}, {\em    Groupes de Selmer et corps cubiques},    J. Number Theory 23 (1986), no. 3, 294--317.


\bibitem{silverman}J. H. Silverman, {\em   Advanced topics in the arithmetic of elliptic curves},   Graduate Texts in Mathematics, 151. Springer-Verlag, New York, 1994, xiv+525, ISBN: 0-387-94328-5

\bibitem{stephens}N. M. Stephens, {\em   The diophantine equation $X^3+Y^3=DZ^3$ and the conjectures of Birch and Swinnerton-Dyer},    J. Reine Angew. Math. 231 (1968), 121--162.

\bibitem{tate}J. Tate,  {\em Algorithm for determining the type of a singular fiber in an elliptic pencil}, Modular Functions of One Variable IV, Lecture Notes in Math., Springer 476 (1975) 33--52.


\bibitem{VA} A. V\'{a}rilly-Alvarado, {\em Density of rational points on isotrivial rational elliptic surfaces},  Algebra Number Theory, 5, no. 5 (2011), 659--690.

\bibitem{zk87} D. Zagier, G. Kramarz,   {\em Numerical investigations related to the $L$-series of certain elliptic curves.
},   J. Indian Math. Soc. 52 (1987), 51--69.

\bibitem{zhao}C. Zhao, {\em A criterion for elliptic curves with second lowest 2-power in $L(1)$},   Math. Proc. Cambridge Philos. Soc. 131 (2001), no. 3, 385--404.


\bibitem{szhang}S. Zhang, {\em   The nonvanishing of $L(1)$ for the $L$-series of some elliptic curves},   Adv. in Math. (China) 24 (1995), no. 5, 439--443.




\end{thebibliography}
\end{document}